\crefname{equation}{}{} 
\crefname{enumi}{}{} 
\crefname{figure}{Figure}{Figures}
\newtheorem{theorem}{Theorem}[section]
\newtheorem{lemma}[theorem]{Lemma}
\newtheorem{proposition}[theorem]{Proposition}
\newtheorem{corollary}[theorem]{Corollary}
\theoremstyle{definition}
\newtheorem{definition}[theorem]{Definition}
\newtheorem{remark}[theorem]{Remark}
\DeclareMathOperator{\dist}{dist}
\def\Xint#1{\mathchoice
{\XXint\displaystyle\textstyle{#1}}%
{\XXint\textstyle\scriptstyle{#1}}%
{\XXint\scriptstyle\scriptscriptstyle{#1}}%
{\XXint\scriptscriptstyle\scriptscriptstyle{#1}}%
\!\int}
\def\XXint#1#2#3{{\setbox0=\hbox{$#1{#2#3}{\int}$ }
\vcenter{\hbox{$#2#3$ }}\kern-.6\wd0}}
\def\dashint{\Xint-}
\numberwithin{equation}{section}
\title{Caffarelli's work on elliptic free boundary problems}
\author[C. De Lellis]{Camillo De Lellis}
\address{School of Mathematics, Institute for Advanced Study, 1 Einstein Dr., Princeton NJ 08540, USA}
\email{camillo.delellis@ias.edu}
\begin{document}

\begin{abstract}
The aim of the note is to illustrate some of the ideas introduced by Luis Caffarelli in his groundbreaking works on the regularity theory for elliptic free boundary problems, in a way which can be understood by non-experts.
\end{abstract}

\maketitle

\tableofcontents

\section{Introduction}
The aim of this note is to give an idea of the profound impact of Luis Caffarelli's work on free boundary problems. I think that it is fair to say, without diminishing the importance of his contributions, that the impact has been felt mostly in certain type of free boundary problems, which are classified as ``elliptic and parabolic'' in the literature. While these are two class of problems which share certain commonality (as it is the case for linear PDEs: the Laplace and heat equations certainly have many points of contact), confronted with the first of many choices to make, I declare right away that I will limit myself to the elliptic side.

\medskip

I dare say that the mathematical theory of elliptic free boundary problems can be considered a standalone field in the area of partial differential equations. A quick search on Mathscinet, Zentralblatt, or arXiv of a few keywords will produce a number of research papers on the topic and I hope it will convince most people that my claim is not outlandish. It is nearly impossible to give a good account of the developments in such a field in a single paper. In fact there are books and survey articles (see e.g. \cite{Caffarelli-Salsa,Alessio,Bozho,Xavi}), forming a quite bulky set of references, and each of these references is devoted to some particular aspect or to some particular family of problems. 

The claim that ``elliptic free boundary problems form a field'' is in great part due to the revolutionary ideas injected by Luis since his very first papers on the topic, cf. \cite{Caffarelli-Acta}, and even giving an account of {\em all} the research produced by him on the subject is a demanding task. Making a somewhat limited choice of what to explain to the reader seemed unavoidable to me. On the other hand I will try to compensate such arbitrary and limited choice by keeping this article as nontechnical as possible, in the hope that it can be understood by a general audience of mathematicians. In order to follow the discussions in this note a basic knowledge of measure theory, of functional analysis, and of very elementary facts about harmonic functions will suffice. In fact I will list immediately all the facts about harmonic functions which I will take for granted:
\begin{itemize}
\item the mean value property (and the elementary estimates on higher derivatives which can be derived from it);
\item the maximum principle;
\item the Liouville theorem;
\item the Weyl lemma\footnote{By ``Weyl's lemma'' I understand the statement that a weakly harmonic function is in fact smooth and harmonic in the classical sense; a weakly harmonic function is in turn understood as an element $u$ of $H^1 (D)$ (where $D$ is some open set of $\mathbb R^n$) which is stationary for the Dirichlet energy, or in other words such that $\langle \nabla u, \nabla v\rangle_{L^2} =0$ for every $v\in C^\infty_c (D)$.}.
\end{itemize}
The classical (19-th century) Harnack estimate for harmonic functions will also play a pivotal role, but I will recall what it is at the appropriate place. Finally, I will assume that the reader has seen once in her life the classical computations to derive Euler-Lagrange conditions of minima in the calculus of variations.

\medskip

Let me close this introduction with an important disclaimer: I am an outsider in the subject, because I never worked on any free boundary problem. When Carlos (Kenig) asked me if I would entertain the idea of writing a piece for the Bulletin illustrating Luis' works in the field, I in fact objected that there are many researchers much more qualified than me to accomplish such a task. Carlos however countered that he was looking precisely for the point of view of a layman and then, perhaps too rashly, I agreed, tantalized by the idea of learning more about a work which is so widely (and justly) celebrated.

\subsection{First roadmap} The following is the rough plan of the note. 
\begin{itemize}
\item In Section \ref{s:uno} I will explain what free boundary problems are in general and I will introduce four prototypical problems in which the work of Luis changed the landscape. These problems have all a variational nature and I will explain why they can be regarded as ``free boundary problems''. I will also briefly mention why the existence of their solutions is, nowadays, a simple byproduct of standard functional analysis.  
\item In Section \ref{s:due} I will explain why the work of Luis is so groundbreaking: let me anticipate that prior to his papers almost nothing was known on the ``regularity of the free boundary'' of the solutions. With his works not only Luis produced theorems which lifted our almost empty knowledge to a very satisfactory level, but he also paved the way to a variety of later developments in the field. I will argue that there is an underlying philosophy to his approach to regularity which is quite general and powerful, and which is in fact quite neatly explained in other references, however all the ones I know are written for researchers in PDE.
\item In the Sections \ref{s:tre}, \ref{s:quattro}, \ref{s:cinque}, \ref{s:sei}, and \ref{s:sette} I will explain in some detail the celebrated regularity theorem of Luis for one of the four variational problems introduced in Section \ref{s:uno}. The idea is that the interested reader will see the philosophy mentioned above ``in action'' on a concrete example. The theorem also happens to be one of the very first contributions of Luis in the field and it can be considered as the origin of the whole story. Since at this point I lack the basic terminology to explain what each of these sections contains, a roadmap to them will be given in Section \ref{s:roadmap-2}.  
\item Section \ref{s:otto} will serve as a teaser for further readings. I will mention how similar regularity theorems were proved by Luis and some of his collaborators in the context of the other three problems and I will point out further readings in the area.   
\end{itemize}

\subsection{Acknowledgments} The author acknowledges the support of the NSF foundation through the grant DMS-2350252. He also expresses his gratitude to Guido De Philippis, Alessio Figalli, Carlos Kenig, and Luca Spolaor, for reading carefully earlier versions of the manuscript and helping him improving it in several ways. 

\section{Some elliptic free boundary problems}\label{s:uno}

Consider a (bounded and smooth) open set $D\subset \mathbb R^n$. In a free boundary problem one typically looks for a pair $(\Omega, u)$, where $\Omega \subset D$ is another (generally open) domain and $u: \Omega \to \mathbb R$ a function, subject to:
\begin{itemize}
\item a partial differential equation for $u$ in the unknown domain,
\begin{equation}\label{e:PDE}
Lu = 0 \qquad \mbox{in $\Omega$}\, ,
\end{equation}
where $L$ is some differential operator;
\item a boundary condition on the ``fixed boundary'', namely
\begin{equation}\label{e:BC1}
B_0 u = 0 \qquad \mbox{on $\partial D \cap \partial \Omega$} 
\end{equation}
(which typically would translate into a determined PDE problem if $D=\Omega$);
\item an ``overdetermined'' boundary condition on the ``free boundary'', namely
\begin{equation}\label{e:BC2}
B_1 u = 0 \qquad \mbox{on $D\cap \partial \Omega$.}
\end{equation}
\end{itemize}
As aleady mentioned $\Omega$ is also an unknown of the problem and we are ``free'' to set its boundary in $D$: if we were dealing with a fixed domain the overdetermined PDE problem would typically have no solutions, but because we are free to vary $\Omega$ we hope there is some special choice of it for which \eqref{e:PDE}, \eqref{e:BC1}, and \eqref{e:BC2} do admit a solution $u$. 

This setting is at the same time not general enough for the four examples that I will give below and too general for the purpose of this note. First of all, the nature of the PDE problem could be anything, given the generality above. However, the works which I am going to illustrate all end up being ``elliptic'', and in fact the reader can think of $Lu$ as the ordinary Laplacian $\Delta u = \sum_i \frac{\partial^2 u}{\partial x_i^2}$. Similarly the boundary conditions will satisfy some structural properties, in fact they will involve just the value of $u$ and its derivatives at the boundary: the reader can think that they take the form $F_i (x, u(x), \nabla u (x))=0$ for some suitable functions $F_i$. As already hinted in the introduction, Luis' work has also been deeply influential on parabolic problems, but in this note I will completely neglect them.

The setting above is however not general enough to include the second example given below, the ``thin obstacle''. In the latter the unknown domain is not really the place where the PDE is solved, but the place where certain boundary conditions hold. In other words, $\Omega$ is not unknown and is the whole domain $D$, whereas (besides the function $u$) the second unknown is a subset $\Omega'\subset \partial D$: it is rather the boundary condition which will differ on $\Omega'$ and on $\partial D \setminus \Omega'$. 

\medskip

We will now analyze four examples of variational problems whose minima satisfy elliptic free boundary problems. Each of them are motivated by actual problems outside of mathematics (in fact prototypical problems in several areas of mathematical physics) and have a rich history, but I will make no attempt to describe the latter and defer the reader to the literature in the bibliography.

\subsection{The obstacle problem}\label{s:obstacle}
The first example is the classical ``obstacle problem''. The data of the problem are:
\begin{itemize}
\item a domain $D\subset \mathbb R^n$ (again bounded and smooth), 
\item a smooth function $\varphi$ on it (the {\em obstacle}),
\item  and a smooth function $g$ on $\partial D$.
\end{itemize}
We then look for the minimum of a suitable energy functional $E(u)$ on the class of functions $u: D \to \mathbb R$ which ``lie over the obstacle'', i.e. such that $u\geq \varphi$ over the whole $D$, and satisfy the boundary condition $u=g$ on $\partial D$. As the two conditions would otherwise be incompatible, we will assume that $\varphi|_{\partial D} \leq g$. The model energy we will consider here is the Dirichlet energy $E (u) = \int_D |\nabla u|^2$, but of course one can consider more general ones. In this model example the existence of the solution is a simple exercise in functional analysis: if we consider the subset $K\subset H^1 (D)$ of functions satisfying the two constraints, it should be pretty obvious to the reader that $K$ is convex and it is not too hard to show that it is closed. The energy being itself strictly convex, so not only the minimizer exists in $K$, but it is in fact unique (see for instance \cite[Theorem 5.6]{Brezis}). 

However, why is the minimum just found also the solution of a ``free boundary problem''? Let me give an answer under some simplifying assumptions. If $u$ is continuous and we set $\Omega:= \{u>\varphi\}$, clearly $\Delta u =0$ on $\Omega$: $u+\varepsilon w$ lies in $K$ whenever $w \in C^\infty_c (\Omega)$ and $\varepsilon$ is small enough, leading to the first variation condition
\[
\int \nabla u \nabla w = 0 \qquad \forall w\in C^\infty_c (\Omega)\, ,
\]
so Weyl's lemma does the rest.

At the fixed boundary $\partial D\subset \partial \Omega$ $u$ satisfies the Dirichlet boundary condition $u|_{\partial D}=g$. The ``free boundary'' is now $\partial \Omega \cap D$. Clearly by definition 
\begin{equation}\label{e:Dirichlet}
u|_{D\cap \partial \Omega} = \varphi\, ,
\end{equation}
which is one boundary condition. If $u$ were differentiable at the free boundary, we would also have the additional condition $\nabla u|_{D \cap \partial \Omega} = \nabla \varphi$ (which is really a condition on the normal derivative when the free boundary $D\cap \partial \Omega$ is smooth, because the equality of tangential derivatives is implied by \eqref{e:Dirichlet}), and the reader will recognize the ``overdetermined boundary condition'' at the free boundary: we have a Dirichlet and a Neumann conditions at the same time. However, is it reasonable to expect that the solution is differentiable at the free boundary? 

\begin{remark}\label{r:notC2}
Note that the question is quite well motivated. In fact it is clear that the second derivatives cannot, in general, be continuous: on the one hand we know that on $\Omega$ the trace of $D^2 u$ is zero, on the other hand, if the complement has nonempty interior then the trace of $D^2 u$ on it equals the trace of $D^2 \varphi$: if $\varphi$ is, say, concave (a reasonable assumption for an obstacle), $D^2 u$ must undergo a discontinuity at $\partial \Omega\cap D$. In order to get a concrete example we just need to find a strictly convex obstacle $\varphi$ and a boundary condition $g$ which ``forces'' the minimizer to touch the obstacle on some set with nonempty interior. This is, however, a rather simple task.
\end{remark}  

It is on the other hand not difficult to convince oneself that the continuity of the first derivative of the solution at the free boundary is a reasonable ``variational restriction'' for a minimizer. Assume for instance that $\partial \Omega$ is smooth at some point $x_0$. $u$ is then smooth on ``both sides'' of $\partial \Omega$ in a neighborhood of $x_0$: on $\Omega$ (say the right side) it is an harmonic function satisfying a Dirichlet boundary condition at $\partial \Omega$, while on the other (the left) side it coincides with the smooth function $\varphi$. It is thus differentiable from both sides at $x_0$ and it suffices to show that the normal derivative does not jump. Denote by $\nu$ the unit normal that points towards the interior of $\Omega$. The normal derivative $\frac{\partial u^-}{\partial \nu}$ on the left side equals $\frac{\partial \varphi}{\partial \nu}$. On the right side we must have $\frac{\partial u^+}{\partial \nu}\geq \frac{\partial \varphi}{\partial \nu}$, because of the condition $u\geq \varphi$. On the other hand it is energetically not convenient for $u$ to satisfy a strict inequality: the reader is invited to analyze the model situation of $1$ dimension: $u$ is linear on both sides: ``cutting the corner'' as in Figure \ref{f:cutting} will lower its energy while keeping the constraint.  

\begin{figure}[htbp]
\begin{center}
\input{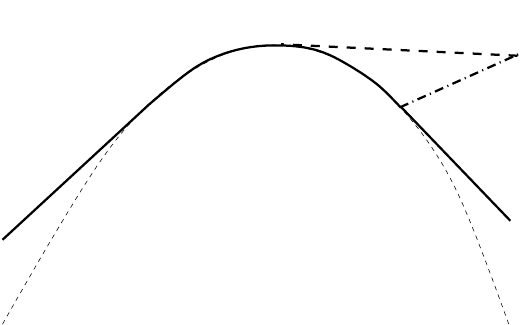_t}
\end{center}
\caption{A visual explanation of why the solution of the obstacle problem cannot leave the obstacle forming a corner, in a simple one-dimensional case. The thin dashed line is the graph of the obstacle $\varphi$. The thick line is the graph of a solution of some obstacle problem $u$ for a boundary data $g>\varphi$ (the value of $u$ at the two endpoints). $v$ is an hypothetical solution of some other obstacle problem which agrees with $u$ on the left of the point $p$ but then leaves the obstacle forming an angle (the dotted-dashed thick line; clearly the boundary data for $v$ differs at the right endpoint). However it is clear that this behavior is energetically unfavorable: the dashed thick line is the graph of a competitor $w$ which achieves the same boundary condition as $v$ but has lower energy.}\label{fig:1}\label{f:cutting}
\end{figure}

\subsection{The thin obstacle} In the thin obstacle problem we again wish to minimize an elliptic energy (which for simplicity we assume again to be the Dirichlet energy) over a class of functions which this time are assumed to satisfy a boundary condition of Dirichlet type $u=g$ on some portion of the boundary and an inequality of the form $u\geq \varphi$ on the remaining portion. A model situation is the following:
\begin{itemize}
\item $D$ is the half-ball $B^+= \{x\in \mathbb R^n : |x|<1, x_n >0\}$,
\item $u=g$ on the half sphere $S^+ = \partial B^+\cap \{x_n>0\}$,
\item and $u\geq \varphi$ on the flat part of the boundary  $H^+:= \partial B^+\cap \{x_n =0\}$. 
\end{itemize}
Once again it makes sense to assume the compatibility condition $g\geq \varphi$ on $H^+\cap S^+$ and we notice that the existence (and uniqueness) of the solution is a simple byproduct of the convexities of the problem (the energy is strictly convex and the appropriate functional-analytic domain is convex). 

The solution does not fall in the ``classical free boundary formulation'' explained above, as I already anticipated to the reader: it is in fact an harmonic function on the whole domain $B^+$, because any variation with compact support in $B^+$ does not touch the conditions at the boundary. On the other hand we can understand the free boundary as some appropriate (presumably $(n-2)$-dimensional) set in $H^+$ which subdivides it into two regions where $u$ satifies two different boundary conditions. In fact, assume that $u$ is continuous up to the boundary, so that $H^+\cap \{u>\varphi\}$ is relatively open in $H^+$. We are then free to vary the function by adding $\varepsilon w$ for some $w$ which does not vanish on $H^+ \cap \{u>\varphi\}$, provided that it vanishes on $S^+$ and on $H^+ \cap \{u=\varphi\}$. With this choice of variation it is a classical computation that $u$ must satisfy the Neumann boundary condition $\frac{\partial u}{\partial \nu}=0$ on $H^+\cap \{u> \varphi\}$. In the relative interior of $H^+\cap \{u=\varphi\}$ clearly $u$ satisfies the Dirichlet boundary condition $u=\varphi$ by definition, but it is also natural to expect that it must satisfy $\frac{\partial u}{\partial \nu}\leq 0$ (where $\nu$ is pointing in the direction of $B^+$), since it is clearly energetically unfavorable for the function $u$ to go ``upwards'' while entering the domain (in that case lowering the slope clearly lowers the energy). 

Hence, if we set $\Omega':= H^+ \cap \{u>\varphi\}$ we end up with the PDE problem 
\begin{equation}
\left\{
\begin{array}{ll}
\Delta u = 0 \qquad & \mbox{on $B^+$}\\
u = g \qquad &\mbox{on $S^+$}\\
u-\varphi>0, {\textstyle{\frac{\partial u}{\partial \nu}}}=0 \qquad &\mbox{on $\Omega'$}\\
u-\varphi=0,  {\textstyle{\frac{\partial u}{\partial \nu}}}\leq 0 \qquad &\mbox{on $H^+\setminus \Omega'$}
\end{array}
\right.
\end{equation}
in the unknown pair $(\Omega', u)$. 

\subsection{The Bernoulli one-phase problem} In the third example, having fixed a domain $D$ and {\em nonnegative} function $g$ on $\partial D$, we turn to minimizing the energy 
\[
E (u) := \int |\nabla u|^2 + \Lambda |\{u>0\}|\, 
\]
among all functions $u$ such that $u|_{\partial D}=g$, where $|A|$ denotes the Lebesgue measure of any (measurable) set $A$ and $\Lambda$ is a positive constant. 

The existence of a minimizer is less obvious than in the previous cases. Observe that we could rewrite the energy as 
\[
E (u) = \int (|\nabla u|^2 + \chi (u))
\] 
introducing the step function 
\[
\chi (w) := \left\{
\begin{array}{ll}
\Lambda \qquad &\mbox{if $w>0$}\\
0 \qquad \qquad &\mbox{otherwise}\, .
\end{array}
\right.
\]
Let us approximate $\chi$ pointwise monotonically with a sequence of increasing smooth nonnegative functions $\chi_k\uparrow \chi$. The functionals 
\[
u\mapsto \int \chi_k (u)
\]
are then continuous for the weak topology over bounded subsets of $H^1$. Therefore
\[
u\mapsto \int \chi (u) = \sup_k \int \chi_k (u)\, 
\]
is lower semicontinuous. The energy $E(u)$ is thus lower semicontinuous as well and the existence of a minimizer can be concluded from the direct methods in the calculus of variations. 

Once again we address the question: why is a minimizer a solution of an (elliptic) free boundary problem? Let us assume that $u$ is continuous and introduce $\Omega:= \{u>0\}$. As in the previous cases $\Delta u =0$ on $\Omega$ and $u=g$ on $\partial D \cap \partial \Omega$. Moreover, $u=0$ on $D\cap \partial \Omega$ by definition of $\Omega$. 
According to our setting, we need to find a second boundary conditions at the free boundary. 

Consider then a smooth vector field $X\in C^\infty_c (D, \mathbb R^n)$ and let $\Phi (t,\cdot)$ be the one-parameter family of diffeomorphisms of $D$ generated by it. If we let $u_\varepsilon (x) := u (\Phi (\varepsilon, x))$, we see immediately that $u_\varepsilon (x) = u(x) = g (x)$ at the boundary of $D$. We can thus conclude the Euler-Lagrange condition
\begin{equation}\label{e:internal_variation}
\left.\frac{d}{d\varepsilon}\right|_{\varepsilon =0} E (u_\varepsilon) = 0\, .
\end{equation}
If we assume that the free boundary $D\cap \partial \Omega$ is smooth and denote by $\nu$ the unit normal to it pointing towards $\Omega$, an elementary computation shows that the left hand side of \eqref{e:internal_variation} equals the boundary integral
\[
\int_{D\cap \partial \Omega} (\Lambda - |\nabla u|^2) X \cdot \nu\, .
\]
Given that $X$ is an arbitrary vector field, we infer the boundary condition $|\nabla u|=\sqrt{\Lambda}$ at $D\cap \partial \Omega$.

\subsection{The two-phase problem} In the two-phase problem the set-up is similar to that of the Bernoulli problem, except that $g$ is not necessarily assumed to have a sign and the energy to minimize is given by
\[
\int |\nabla u|^2 + \Lambda^+ |\{u>0\}| + \Lambda^- |\{u<0\}|\, ,
\] 
where $\Lambda^+$ and $\Lambda^-$ are positive and distinct. 
The existence of a minimizer can be proved following the same argument given for the one-phase problem. As in the one-phase problem, if we assume $u$ continuous, it is obvious that $\Delta u =0$ on $\Omega := \{u\neq 0\}$. The conditions satisfied by the free boundary are however more complicated. 

First of all, if in a neighborhood of a point $x_0\in D\cap \partial \Omega$ the function is nonpositive or nonnegative, in that neighborhood the function is also a minimizer of a one-phase functional. Such points are therefore named ``one-phase'' points and the boundary conditions on them match the one derived in the previous section. The new interesting part is thus given by the ``two-phase'' points $x_0$, namely those which belong to $D\cap \partial \{u>0\} \cap \partial \{u<0\}$. Under the simplifying assumption that $\partial \{u>0\}\cap \{\partial u<0\}$ is a smooth interface and that $u$ is differentiable on ``both sides'' of it, the same variation used in the one-phase case leads to the boundary condition 
\[
|\nabla u^+|^2 - |\nabla u^-|^2 = \Lambda^+- \Lambda^-\, .
\]

\section{Regularity of the free boundary: general themes}\label{s:due}

In all four examples of the previous section, deriving the existence of a minimizer of the corresponding variational problem is pretty straightforward (while, of course, we are taking advantage of a long history of functional analytic tools developed in the past decades). However the resulting minimizer may be far from the description given at the beginning of the section of a solution of a ``free boundary problem''. 

To begin with, the only immediate information about the minimizers is that they belong to the space $H^1 (D)$. The very first question is thus whether we can gain some more regularity for the solution as {\em a function defined on the whole domain $D$}. Note however that we are not expecting, in general, too much regularity in this respect. We already argued for the obstacle problem that it is unreasonable to expect that a solution has continuous second derivatives across the free boundary, cf. Remark \ref{r:notC2}: a good guess would be that, as a function over the whole domain $D$, the minimizer is differentiable with Lipschitz continuous first derivative (in the PDE literature these functions form the class $C^{1,1}$). Likewise, if we take a closer look at a solution of the one-phase Bernoulli problem, we expect it to be identically $0$ on one side of the free boundary and to start off with positive normal derivative (in fact the absolute value of the latter should be $\sqrt{\Lambda}$) on the other side. We can therefore expect them to be, say, Lipschitz continuous over $D$, but not everywhere differentiable.  

Some results showing the expected regularity for the function $u$ when regarded as a function on the whole domain $D$ existed before Luis' work, for instance the $C^{1,1}$ regularity of solutions to the obstacle problem was indeed proved by Frehse, cf. \cite{Frehse}. For the other three problems mentioned in the previous sections later works of Luis (some with co-authors) got the expected regularity, cf. \cite{Caffarelli-79,Alt-Caffarelli,ACF}. 

Though interesting, fundamental, (and in the appropriate sense optimal) these results shed however little light on the issue which instead seems of primary interest, namely how regular is the ``free boundary''. Due to the results on the regularity as a function over $D$ we can however at least identify a good candidate for the latter object: for instance in the obstacle problem just the continuity of the solution (and Frehse's theorem goes well beyond that) allows us to introduce the open domain $\Omega:=\{u> \varphi\}$ and we can expect that its boundary will provide a ``classical'' solution of the free boundary problem in its ``PDE formulation''. On the other hand, for all we know at the moment, $\partial \Omega$ could very well be a fractal, or even a set of positive Lebesgue measure. 

Prior to Luis' first works in the area (i.e. essentially prior to \cite{Caffarelli-Acta}), the overall results in gaining some form of regularity for $\partial \Omega$ were almost inexistent: the ones I am aware of concern the first  two of the four examples in Section \ref{s:uno}: (notable) works of Hans Lewy, cf. \cite{Lewy1,Lewy2}, in the cases of the obstacle and thin obstacle problems proved regularity for the free boundary in the $2$-dimensional setting. There were instead quite general works showing that some initial regularity of $\partial \Omega$ could be bootstrapped to higher one (cf. the work \cite{KN}). 

The groundbreaking paper \cite{Caffarelli-Acta} not only gave a satisfactory answer to the boundary regularity of $\partial \Omega$ for the obstacle problem, but it also outlined a general strategy to attack the question which was later implemented by Luis himself and co-authors (e.g. in the other three examples of the previous section) and by other mathematicians in several other settings. The work outlined a general philosophy, which I will try to explain next.

\medskip

As already remarked, a first regularity result for the solution $u$ on the ``whole domain $D$'' allows us at least give sense to the open domain $\Omega$ whose boundary is supposed to be the ``classical free boundary''. Assume now for a moment that we knew the desired regularity (say $C^1$) of $D\cap \partial \Omega$ at some point $x_0\in D\cap \partial \Omega$, which for the sake of our discussion we can assume it is the origin. Then $\partial \Omega$ would have a tangent plane $H$ at $0$ and, up to applying a rotation, let us assume that it is $H=\{x_n=0\}$, with $\nu = (0,0, \ldots, 1)$ being the unit normal pointing towards $\Omega$. 

In the case of the Bernoulli one-phase problem we would know that\footnote{In principle the boundary condition derived in the previous section would tell us that $\frac{\partial u}{\partial \nu} (0) = \pm \sqrt{\Lambda}$, observe however that we can exclude the minus sign because the solution $u$ of our variational problem is everywhere nonnegative.} $\frac{\partial u}{\partial \nu} (0) = \sqrt{\Lambda}$, so we would infer that the rescalings
\[
u_r (x) := \frac{u(rx)}{r} 
\]   
converge uniformly on compact sets to the function
\[
u_0 (x) := \left\{
\begin{array}{ll}
0 \qquad & \mbox{if $x_n<0$}\\
\sqrt{\Lambda}\, x_n &\mbox{if $x_n>0$.}
\end{array}
\right.
\]
In the case of the obstacle problem, consider the function $v:= u-\varphi$, and recall that $\Omega$ is defined to be $\{u>\varphi\}= \{v>0\}$. This time observe however that the function is continuonsly differentiable, in particular $\nabla u (0)=\nabla \varphi (0)$ and hence $v$ grows no faster than quadratically ``away from the obstacle'' in the $x_n$ direction (in fact this is guaranteed by Frehse's theorem!). Consider that $\Delta v = - \Delta \varphi$ on $\Omega$: if we set $\lambda:= - \frac{1}{2} \Delta \varphi (0)$, we would conclude that the rescalings
\[
v_r (x) := \frac{v(rx)}{r^2} 
\]   
converge uniformly on compact sets to the function
\[
v_0 (x) := \left\{
\begin{array}{ll}
0 \qquad & \mbox{if $x_n<0$}\\
\lambda x_n^2 &\mbox{if $x_n>0$.}
\end{array}
\right.
\]
\begin{remark}\label{r:concavity}
Observe in passing an interesting fact: since $v>0$ on $\Omega$, the latter picture forces $\Delta \varphi (0)\leq 0$. In other words at a point where $\partial \Omega$ is regular, it cannot be that $\Delta \varphi$ is positive. 
\end{remark} 

The first idea of \cite{Caffarelli-Acta} is that, even before knowing any regularity for the free boundary, at any given point $x_0\in D \cap \partial \Omega$ it is anyhow useful to look at suitable rescalings and normalizations of the solution $u$ (or of some other appropriate new unknown, like $v=u-\varphi$), dictated by the intuition above. Even with no further information, the variational nature of the problems (or anyway suitable PDE estimates) gives at least enough compactness to take sequential limits of the $u_r$ (or of the $v_r$). For instance, in the case of the obstacle problem this is a simple outcome of Frehse's theorem which gives a uniform bound on the derivative of the rescaled functions (and thus uniform subsequential limits can be extracted using the Ascoli-Arzel\'a Theorem). The latter limits are called ``blow-ups'' in the literature. Notice an important point: since we are using a compactness argument, there is no guarantee that a unique limit exists at each given point (i.e. that the {\em whole family} $\{u_r\}_r$ has a unique limit as $r\downarrow 0$). 

A first important task is however to ensure that these objects are nontrivial. This is accomplished by proving suitable ``growth estimates'' from below as the function leaves the free boundary. Obviously these bounds from below must match, at the level of growth, the one which is dictated by the scaling.   

\medskip

Next, the discussion above also points out that, in order to have some hope of proving that $\partial \Omega$ is regular, we need such blow-ups to have a particular form, roughly speaking to depend only upon one variable, or said otherwise, being translation invariant along all directions parallel to some hyperplane $H$. The latter is then a good candidate (in fact the only candidate!) to be the tangent plane to $\partial \Omega$ at the given point. This is however too much to expect. It is not difficult to give examples where not all the points of $D\cap \partial \Omega$ have the latter property. However we can subdivide $D\cap \partial \Omega$ into two parts, a ``regular part'', and a ``singular part'': for a point to belong to the regular part we ask that at least one blow-up is the expected $1$-dimensional model (e.g. the ones analyzed above for the cases of the osbtacle problem and of the one-phase problem). The singular part is all that remains. 

At this point a pivotal and very much desired property is to infer some restriction about the type of blow-ups which we might encounter. The reader will notice that, by the very way we got to these blow-ups, they are presumably ``global'' solutions of the corresponding variational/PDE problem, i.e. they are defined on the whole space and their restrictions to compact sets are minimizers of the same variational problem. We might then expect some ``Liouville-type'' theorem which classifies these objects. However, Liouville type theorems are pretty hard to get. In the various examples given in this note, some extra help comes from additional information that can be inferred from the blow-up procedure. In the case of the obstacle problem this is for instance a powerful convexity estimate. In other cases an underlying monotonicity formula gives, for the blow-up, the constancy of an additional quantity\footnote{Often these tools can indeed be used to infer a Liouville type theorem without knowing that the objects classified came as a ``blow-up'' limit, but we will disregard this aspect.}. Notably, an interesting monotonicity formula was discovered later for the obstacle problem as well by Weiss (see \cite{Alessio} and the references therein), though it is not powerful enough to give, alone, a complete classification of blow-ups. 

At this point, however, the regular part is regular in name only: we have yet no reason to believe that it is better than a measurable set. A fundamenal discovery of \cite{Caffarelli-Acta} for the obstacle problem (which was then confirmed also in the other examples, although the proofs differ from case to case) is that there is an $\varepsilon$-regularity theorem: once at some scale the solution is sufficiently close to (one of) the good one-dimensional models, then it is, in some appropriate sense, a (relatively) smooth deformation of them.

In the various problems mentioned above, the proofs of these $\varepsilon$-regularity theorems differ substantially. In \cite{Caffarelli-Acta} Harnack estimates for elliptic PDEs do the lion share of the work (together with some very clever geometric considerations), but I will not enter into the topic now because this is the case which will occupy the next sections: we will give quite a few details about this.

\medskip

In many other situations the pivot is a ``flatness improving property'', which, roughly speaking, says that, if at a certain scale $u_r$ is sufficiently close to some ``one-dimensional model'' (or in other words is almost invariant along directions belonging to some hyperplane $H$), at a fixed smaller scale $u_{\beta r}$ is in fact even closer to some other ``one-dimensional model'' (i.e. it is closer to be invariant along directions belonging to some other hyperplane $H'$). The smaller scale $\beta$ is a fixed positive constant, while the improvement is by some fixed factor $\gamma$ (say $\frac{1}{2})$ in an appropriate metric (for instance the $C^0$ distance between the rescaled function and the model). 

This flatness improvement implies that, once at some scale the distance between the rescaling $u_{r_0}$ and our singularity model falls below some threshold, at all smaller scales the $\{u_r\}_{r<r_0}$ are in fact even closer, and form a Cauchy family in the chosen distance. We thus immediately infer from this flatness improving property that the blow-up limit is unique. It tells however more, as one can estimate the distance between the final blow-up and $u_{r_0}$ with a suitable power of $r_0$ (the exponent will depend on the decay factor $\gamma$ and on the smaller scale $\beta$). This triggers therefore a better property then just the $C^1$ regularity of $\partial \Omega$, it in fact implies an H\"older modulus of continuity of the tangent hyperplane. 

\medskip

Coming back to the splitting of the free boundary into a regular and singular part, an important point which is left out from the above discussion is the question of how large the singular set can be and what properties it has. This has been the subject of extensive studies in the 48 years that elapsed between Luis' foundation paper \cite{Caffarelli-Acta}, with the first important results due to Luis himself, both in the obstacle problem and in its cousins. However the latter is a story that goes beyond the scope of my note and I leave the reader to the various  
references suggested in Section \ref{s:otto}.

\medskip

Before getting further, a remark is in order. The pattern discovered by Luis in free boundary problems shares many similarities with several other fundamental discoveries of similar nature in the calculus of variations, analysis of PDEs, and geometric analysis. A pioneering example, in fact probably the very first example in the literature, is given by the works of De Giorgi, Reifenberg, Almgren, and Allard in the context of minimal surfaces. In various articles and surveys Luis himself acknowledges the profound impact that the latter literature, and especially De Giorgi's works in elliptic regularity theory, has had on him. However, reading the original paper \cite{Caffarelli-Acta} I came to the conclusion that the first work of Luis on the obstacle problem has a number of fundamental differences with the approach of the latter authors to regularity, and that the (fruitful!) connection with the minimal surface theory came rather in the later works. 

\subsection{Second roadmap}\label{s:roadmap-2} As claimed in the introduction, in the next five sections I will try to give, in as simple terms as possible, some details on the implementation of the above plan in the model case of the obstacle problem, following essentially \cite{Caffarelli-Acta}, but taking very much advantage of the beautiful survey \cite{Alessio}. In Section \ref{s:tre} I will give the very first elementary remark, which will serve as starting point of our discussion, while in Section \ref{s:quattro} I will discuss Frehse's $C^{1,1}$ regularity theorem, which we will interpret as a suitable bound from above on the growth of $v=u-\varphi$ as we get away from the free boundary. These sections describe essentially the state of the art of the regularity theory for the obstacle problem when \cite{Caffarelli-Acta} appeared, with the notable exception of the two-dimensional case. With the remaining three sections we enter into the fundamental contributions of Luis' paper. Section \ref{s:cinque} will discuss the matching lower bound, introduce the rescalings and their limits (the blow-ups). Section \ref{s:sei} will illustrate the fundamental convexity estimate which triggers the ``classification'' of the blow-ups and underlies the partitioning of the free boundary in a regular and singular part. Section \ref{s:sette} will finally outline the ideas behind the $\varepsilon$-regularity statement. In both these sections we will see the masterful use of the Harnack estimate for harmonic functions (and more in general for solution of elliptic PDEs) which underlie Luis' regularity theory in \cite{Caffarelli-Acta}. 

\section{The obstacle problem: superharmonicity and consequences}\label{s:tre}

In this and in the next section, we assume that $D\subset \mathbb R^n$ is a good domain (open, bounded, and sufficiently smooth), and $g: \partial D\to \mathbb R$ and $\varphi: \overline{D} \to \mathbb R$ two smooth functions with $\varphi|_{\partial D} < g$. We then denote by $u\in H^1 (D)$ the unique solution of the corresponding obstacle problem described in Section \ref{s:obstacle}, namely the unique element of the closed convex subset $K:= \{w\in H^1 (D): u\geq \varphi, u|_{\partial D} =g\} \subset H^1 (D)$ which minimizes the Dirichlet energy $E (u):= \int |\nabla u|^2$. 

The first important remark is that we can always compare the energy of $u$ with that of $u_\varepsilon := u+\varepsilon \psi$ if $\psi\in C^\infty_c (D)$ is a nonnegative function and $\varepsilon\geq 0$, given that under these assumptions $u+\varepsilon \psi \in K$. The condition $E (u_\varepsilon)\geq E(u)$ is then equivalent to 
\[
2\varepsilon \int \nabla u \cdot \nabla \psi + \varepsilon^2 \int |\nabla \psi|^2 \geq 0\, .
\] 
Dividing by $\varepsilon$ and letting $\varepsilon\downarrow 0$ we then infer the Euler-Lagrange condition
\begin{equation}\label{e:weakly_s}
\int \nabla u \cdot \nabla \psi \geq 0 \qquad \mbox{for all $\psi\in C^\infty_c (D)$ such that $\psi\geq 0$\, .}
\end{equation}
If $u$ were sufficiently smooth we could integrate by parts and conclude $-\Delta u \geq 0$, a condition which is commonly called ``superharmonicity'' in the literature. For $u$ smooth a simple computation shows that 
\[
\frac{d}{dr} \dashint_{\partial B_r (x)} u  = \frac{1}{\sigma_{n-1} r^{n-1}} \int_{B_r (x)} \Delta u\, ,
\]
where we denote by $\dashint$ the average and by $\sigma_{n-1}$ the $(n-1)$-dimensional measure of the unit sphere. Hence for smooth superharmonic functions both 
\[
r\mapsto \dashint_{\partial B_r (x)} u \qquad \mbox{and} \qquad r\mapsto \dashint_{B_r (x)} u
\]
are nonincreasing functions. This property is easily passed by approximation to ``weakly superharmonic'' functions (i.e. functions which satisfy \eqref{e:weakly_s}): it suffices to use a standard smoothing of such functions by convolution with a nonnegative smooth and radial kernel, which is seen to preserve \eqref{e:weakly_s}. 

This monotonicity property of superharmonic functions allows us to define the value of $u$ at every $x$ as 
\[
u (x) = \lim_{r\downarrow 0} \dashint_{B_r (x)} u\, . 
\]
Moreover, if we fix $r$ and consider the function
\[
x\mapsto \dashint_{B_r (x)} u\, ,
\]
the latter is easily seen to be continuous as soon as $u$ is summable. Thus the pointwise value of $u$ given above turns out to be the supremum of continuous functions, which means that it is also lower semicontinuous. But then the same property is shared by $v=u-\varphi$ and thus the set
\[
\Omega := \{u>\varphi\} = \{v>0\}\, 
\]
is an open set. Again by lower semicontinuity, the minimum of $u-\varphi$ over any compact subset of $\Omega$ is achieved and it is positive, which allows us to use the argument of Section \ref{s:obstacle} (i.e. to perturb the function with $\varepsilon \psi$ for any arbitrary $\psi\in C^\infty_c (\Omega)$ provided $\varepsilon$ is sufficiently small) and thus to show that $u$ is a classical harmonic function on $\Omega$ using Weyl's lemma. 

We gained therefore the starting point of our discussion: we have a candidate pair $(\Omega, u)$ to be a ``classical PDE solution'' of the free boundary problem discussed in Section \ref{s:obstacle} and in the next three sections we will see to which extent this guess is correct.  

\section{Frehse's regularity theorem for the solution}\label{s:quattro}

From here on we will mainly work with the function $v= u-\varphi$, which was already introduced in the previous two sections. Frehse's theorem states that, if $\varphi\in C^{1,1}$ (namely it has Lipschitz first derivatives), then $u\in C^{1,1}$ as well. To keep technicalities at a minimum I will however assume from here on that $\varphi$ is in fact twice differentiable with continuous second derivatives (namely $C^2$, in the PDE jargon). The first part of the proof is the most important one: we will accomplish a suitable upper bound on the ``growth'' of $v$ at points $x\in D\cap \partial \Omega$. We will then use classical estimates for harmonic functions to conclude.

\subsection{Growth bound} The aim is to show that $v$ grows at most quadratically away from $\partial \Omega$. Fix $x\in \Omega$ and let $x_0\in \partial \Omega$ be the closest point to it. We can assume that $x_0\in D\cap \partial \Omega$ simply by considering $x_0$ closer to $\partial \Omega$ than to $\partial D$ and let $r=|x-x_0| = \dist (x, \partial \Omega)$.  
Recall that $v= u-\varphi$, with $u$ harmonic in $B_r (x)$ and $\varphi\in C^2$: by the mean value property we have 
\[
u (x) = \dashint_{B_r (x)} u\, , 
\] 
while the $C^2$ regularity of $\varphi$ implies immediately
\[
-\varphi (x) \leq - \dashint_{B_r (x)} \varphi + C r^2\, .
\]
Combining the two inequalities we reach 
\begin{equation}\label{e:ub}
v (x) \leq \dashint_{B_r (x)} v + C r^2\, .
\end{equation}
On the other hand, $v$ is nonnegative, so 
\begin{equation}\label{e:ub2}
\dashint_{B_r (x)} v \leq 2^n \dashint_{B_{2r} (x_0)} v\, . 
\end{equation}
By the superharmonicity of $u$,
\[
\dashint_{B_{2r} (x_0)} u \leq u (x_0) = \varphi (x_0)\, ,
\]
while again because of the $C^2$ regularity of $\varphi$
\[
- \dashint_{B_{2r} (x_0)} \varphi \leq - \varphi (x_0) + Cr^2\, .
\]
Summing the last two inequalities we reach 
\begin{equation}\label{e:ub3}
\dashint_{B_{2r} (x_0)} v \leq C r^2\, .
\end{equation}
But then \eqref{e:ub}, \eqref{e:ub2}, and \eqref{e:ub3} together give 
\begin{equation}\label{e:growth_ub}
v (x) \leq C r^2 = C \dist (x, \partial \Omega)^2\, .
\end{equation}

\subsection{Interior $C^{1,1}$ regularity} We recall here a simple consequence of the mean-value property for harmonic functions, namely the estimate
\[
|D^k u (x)| \leq C r^{-k} \sup_{B_r (x)} |u|\, .
\]
Cosider thus a point $x\in \Omega$ closer to $D\cap \partial \Omega$ than to $\partial D$, and set $r:= \dist (x, \partial \Omega)$. The function $y\mapsto w (y) = u (y) - \varphi (x) - \nabla \varphi (x)\cdot (y-x)$ is harmonic in $B_r (x)$ and we can apply the above estimate with $k=2$ to infer
\[
|D^2 u (x)| = |D^2 w (x)| \leq C r^{-2} \sup_{B_r (x)} |w|\, .
\]
We can now use the Taylor formula for the $C^2$ function $\varphi$ to infer that 
\[
|w (y)|\leq |v(y)| + C r^2 \qquad \forall y\in B_r (x)\, .
\]
On the other hand the growth bound of the previous section gives $|v(y)|\leq Cr^2$ as well, so we infer that 
\[
|D^2 u (x)| \leq C\, .
\]
It is not difficult to see that this argument gives a uniform bound for the second derivative of $u$, and hence of the second derivative of $v$, over $\Omega\cap K$ for every compact $K\subset D$. Because $v$ vanishes identically on $K\setminus \Omega$, we could then expect that this gives the desired $C^{1,1}$ regularity. The argument is however slightly more subtle. Rather than giving all the details, we just remark that the idea above applied to the first derivative rather than the second gives a bound of type 
\[
|\nabla v (x)|\leq C \dist (x, \partial \Omega)\, .
\]
Therefore both $v$ and $\nabla v$ extend continuously to zero from $\Omega$ to $D$, thus proving the continuous differentiability of $v$. For the Lipschitz regularity of $\nabla v$ one has to work slightly more, but essentially it is a matter of integrating the bound for the second derivatives on the right paths.

\section{Lower bound on the growth and blow-ups}\label{s:cinque}

What was explained so far was known at the time the paper \cite{Caffarelli-Acta} appeared: this and the next two sections really enter in the main new ideas introduced by Luis. From now on the goal is to study the regularity of the ``free boundary'' $\partial \{u>\varphi\} = \partial \{v>0\}$, but before starting let us observe that, without introducing some more assumptions, we cannot expect any regularity for it. 

Fix two smooth functions $\varphi$ and $g$ and a corresponding solution $u$ of the obstacle problem, hence let $\Omega:=\{u>\varphi\}$. Next let $\lambda$ be any smooth compactly supported bump function inside $\Omega$ with $0\leq \lambda \leq 1$ and define the new obstacle
$\varphi':= \varphi + \lambda (u-\varphi)$, cf. Figure \ref{fig:3}. Obviously $u\geq \varphi'$ and $\varphi'|_{\partial D} < g$, moreover because of the smoothness of $u$ in $\Omega$ and the fact that $\lambda$ is compactly supported in $\Omega$, $\varphi'$ is $C^\infty$ if $\varphi$ is $C^\infty$. We claim that $u$ must be the solution of the obstacle problem with data $g$ on $\partial D$ and obstacle $\varphi'$. This is however obvious: if we were to find a competitor $u'$ which beats the energy of $u$ while staying above $\varphi'$ and taking the boundary data $g$, this competitor would disprove the minimality of $u$ with the obstacle $\varphi$, because $u'\geq \varphi' \geq \varphi$. 

\begin{figure}[htbp]
\begin{center}
\input{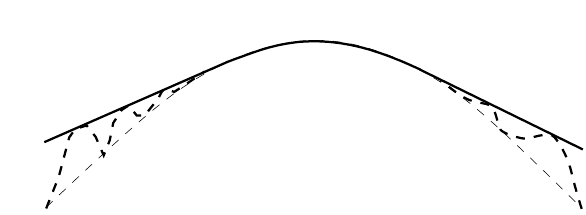_t}
\end{center}\label{f:bad-obstacle}
\caption{A visual explanation of the construction of a bad obstacle, in one space dimension. The thin dashed line is the graph of the obstacle $\varphi$, while the thick line is the solution of some obstacle problem with boundary data $g>\varphi$. The thick dashed line exemplifies the graph of a new obstacle $\varphi'$ which is smooth and lies between $g$ and $\varphi$, while touching $g$ at several points (these additional points form the set $K$ of the text.}\label{fig:3}
\end{figure}

On the other hand, having fixed any compact set $K\subset \Omega$, we can find a $\lambda$ such that $K= \{\lambda=1\}$. So for the obstacle $\varphi'$ the corresponding $\Omega'$ equals $\Omega\setminus K$. As $K$ could be any compact set with nonempty interior, $K$ would, by definition, be part of the free boundary. As $K$ we could however choose even a set with positive Lebesgue measure. 

There is a simple way to exclude such pathological examples. Assume for instance that $K$ were a set of positive measure. Then $u-\varphi'$ would be a nonnegative smooth function which vanishes on $K$, hence its second derivative would also vanish a.e. on $K$. Since $u$ is harmonic, we infer that $\Delta \varphi' = 0$. Such example would then not exist if we assume that the Laplacian of the obstacle is strictly negative. 
While this might seem as an ad hoc assumption, consider that after all strictly concave functions would be a rather large and natural class of obstacles, and the assumption $\Delta \varphi < 0$ is a considerably weaker one.

As a model problem which does not really make a big difference in the analysis, we will in fact assume $\Delta \varphi = -1$\footnote{Assuming $\Delta \varphi$ constant has the great advantage that partial derivatives of $v$ are harmonic functions and will keep our presentation rather simple: in order to handle the case of a general smooth (and positive!) $\Delta \varphi$ we would otherwise need more technical results.}. Under the latter assumption, we note that our function $v$ is a $C^{1,1}$ function which satisfies the conditions
\begin{equation}\label{e:reformulation}
\left\{\begin{array}{l}
\Delta v = \mathbf{1}_{v>0}\\
v\geq 0\\
v|_{\partial D} > 0\, .
\end{array}
\right.
\end{equation}

\subsection{Lower bound on the growth of $v$} As outlined in Section \ref{s:due}, our first task is to show that the function $v$ grows indeed {\em at least} quadratically away from the ``contact set $\{u=\varphi\}=\{v=0\}$'' (analogous estimates are known for the Bernoulli problem and the two-phase problem, but not for the thin obstacle; the availability of such an estimate is often called ``nondegeneracy''). More precisely 

\begin{lemma}\label{l:lower-growth}
Assume $x_0\in \overline{\{v>0\}}$ and $\overline{B}_r (x_0)\subset D$. Then
\begin{equation}\label{e:lower-growth}
\max_{\overline{B}_r (x_0)} v \geq \frac{r^2}{2n}\, .
\end{equation}
\end{lemma}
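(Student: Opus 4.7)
The plan is to use an auxiliary function whose Laplacian exactly cancels the ``$\Delta v = 1$'' term on the non-contact set, turning the problem into a classical maximum principle argument. Specifically, I will consider
\[
w(x) := v(x) - \frac{|x-x_0|^2}{2n}\, .
\]
Since $\Delta\!\left(\tfrac{|x-x_0|^2}{2n}\right) = 1$, the reformulation \eqref{e:reformulation} gives $\Delta w = 0$ on the open set $\{v>0\}$.

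First I would treat the case $x_0 \in \{v>0\}$. Let $U$ be the connected component of $\{v>0\}\cap B_r(x_0)$ containing $x_0$, so that $w$ is harmonic in $U$ and continuous on $\overline{U}$. By the maximum principle, $w$ attains its maximum on $\partial U$. The boundary $\partial U$ splits into two pieces: the portion lying inside $B_r(x_0)$, where $v=0$ and hence $w \le 0$; and the portion lying on $\partial B_r(x_0)$, where $w(y) = v(y) - \tfrac{r^2}{2n}$. Since $w(x_0) = v(x_0) > 0$, the max cannot be achieved on the first piece, so there exists $y \in \partial B_r(x_0)\cap \partial U$ with
\[
0 < v(x_0) = w(x_0) \le w(y) = v(y) - \frac{r^2}{2n}\, ,
\]
which immediately gives $v(y) \ge \tfrac{r^2}{2n}$, proving \eqref{e:lower-growth}.

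The remaining case is $x_0 \in \partial\{v>0\}$, handled by approximation: pick $x_k \in \{v>0\}$ with $x_k \to x_0$, set $r_k := r - |x_k-x_0|$ so that $\overline{B}_{r_k}(x_k)\subset \overline{B}_r(x_0)$, and apply the previous step to obtain $\max_{\overline{B}_{r_k}(x_k)} v \ge \tfrac{r_k^2}{2n}$. Letting $k\to\infty$ (and using $r_k\to r$) yields the desired bound over $\overline{B}_r(x_0)$.

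I do not expect a serious obstacle here; the only subtle point is making sure that on the non-$\partial B_r$ portion of $\partial U$ one actually has $v=0$ (so that $w\le 0$ there), which follows from the continuity of $v$ established via Frehse's theorem in Section~\ref{s:quattro}. The whole argument rests on the algebraic coincidence that $|x|^2/(2n)$ is the simplest explicit function whose Laplacian matches the right-hand side of the PDE in \eqref{e:reformulation}, and on the $C^{1,1}$ regularity that legitimizes applying the classical maximum principle to $w$.
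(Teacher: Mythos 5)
Your proof is correct and takes essentially the same route as the paper: the identical auxiliary function $w(x)=v(x)-\tfrac{|x-x_0|^2}{2n}$, the same maximum-principle argument forcing the positive maximum onto $\partial B_r(x_0)\cap\{v>0\}$, and the same reduction by continuity to the case $x_0\in\{v>0\}$. The only cosmetic differences are that you restrict $w$ to a connected component of $\{v>0\}\cap B_r(x_0)$ (unnecessary but harmless) and you spell out the approximation argument for $x_0\in\partial\{v>0\}$ more explicitly than the paper does.
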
 

Note that by continuity of $v$ it suffices to prove the lower bound when $x_0\in \{v>0\}$. In the latter case consider the function $w (x):= v(x)-\frac{|x-x_0|^2}{2n}$. Because $w (x_0) = v(x_0) > 0$ and because $w$ is negative on the complement of $\{v>0\}$ and harmonic on $\{v>0\}$, $w$ must achieve its positive maximum in $\overline{B}_r (x_0)$ at some point $\bar x$ in $\{v>0\} \cap \partial B_r (x_0)$. But then
\[
0 < w (\bar x) = v (\bar x) - \frac{r^2}{2n}
\]
implying $\max_{\overline{B}_r (x_0)} v \geq v (\bar x) > \frac{r^2}{2n}$. 

\subsection{Blow-ups}\label{s:bu} The existence and nontriviality of blow-ups is now practically an exercise. Let us fix $x_0\in D\cap \partial \Omega$ and without loss of generality, given the translation invariance of the problem, assume that $x_0=0$. As already mentioned in Section \ref{s:tre} the pivotal rescaling is 
\begin{equation}\label{e:rescaling}
v_r (x):= \frac{v (rx)}{r^2}\, .
\end{equation}
Note that $v_r (0)= v (0)=0$ and $\nabla v_r (0)= \nabla v (0)=0$. Moreover, the domain of each such function is $r^{-1} D$: as $r\downarrow 0$ such domains ``invade'' the whole space $\mathbb R^n$.

Recall next that we have Frehse's $C^{1,1}$ estimate. The latter is essentially an estimate on the second derivative: it says that, even though not continuous, it is bounded. The rescaling above preserves however the size of the second derivative, i.e. $D^2 v_r (x) = D^2 v (rx)$. Thus the Liphscitz constant of $\nabla v_r$ is uniformly bounded. Moreover, given that $\nabla v_r (0)=0$ and $v_r (0)=0$ we immediately infer that, on every fixed ball $B_R (0) \subset \mathbb R^n$, both $\|v_r\|_{C^0 (B_R (0))}$ and $\|\nabla v_r\|_{C^0 (B_R (0))}$ enjoy uniform bounds (the first with a quadratic dependence on $R$ and the second with a linear dependence). But then by the Ascoli-Arzel\'a theorem, for every sequence $r_k\downarrow 0$ there is a subsequence, not relabeled, such that $v_{r_k}$ converges uniformly on compact subsets of $\mathbb R^2$ to some function. It is also not difficult to see that the limit $v_0$ is as well $C^{1,1}$ and that $\nabla v_0 (0)=0$ and $v_0 (0)=0$.  $v_0$ is what we will call ``a blow-up'' at the point $x_0\in \partial \Omega$\footnote{For general $x_0\in \partial \Omega$ the rescaled functions will take the form $v_r (x)= r^{-2} v (x_0+rx)$. This essentially amounts to first translate the solution so that $x_0$ is the origin, and then use the formula \eqref{e:rescaling}.}. 

\medskip

Concerning the ``nontriviality'', consider now a blow-up $v_0$. We can introduce the set $\Omega_0 = \{v_0>0\}$. Fix a $\rho>0$ and note that the lower bound 
\[
\max_{\overline{B}_{r\rho}} |v| \geq \frac{(r\rho)^2}{2n} 
\]
translates into
\[
\max_{\overline{B}_\rho} |v_r|\geq \frac{\rho^2}{2n}\, .
\]
Since $v_0$ is the uniform limit of some $v_{r_k}$, we immediately infer 
\[
\max_{\overline{B}_\rho} |v_0|\geq \frac{\rho^2}{2n}\, .
\]
In particular the blow-up $v_0$ is nontrivial, namely $\Omega_0\neq\emptyset$. In fact we just proved that $0\in \partial \Omega_0$. 

\section{Convexity estimate and classification}\label{s:sei}

Fix then a point $x\in \Omega_0$ and use the continuity of $v_0$ to find a ball $\overline{B}_s (x)$ over which the minimum of $v_0$ is positive. It turns out that for $k$ large enough also the minimum of $v_{r_k}$ will be positive and hence $v_{r_k} (\cdot) = r_k^{-2} v (r_k \cdot)$ is smooth on the ball. In fact, since $\Delta v_{r_k} (y) = \Delta v (r_k y)$, $\Delta v_{r_k} \equiv 1$ on $B_s (x)$. But then classical estimates for harmonic functions (given that $y\mapsto v_{r_k} (y)-\frac{|y|^2}{2n}$ is harmonic!) imply that $v_{r_k}$ converges smoothly to $v_0$ on $B_s (x)$ (and thus on any compact subset of $\Omega_0$). So $\Delta v_0 = 1$ on $\Omega_0$: our blow-up ``inherited'' the same PDE of the rescaled functions. 

\medskip

It would be wonderful if we had a Liouville-type theorem classifying such solutions. Note that we even have an upper bound, since Frehse's estimate, by the very computations which we used to gain the lower bound, proves that $|v_0 (x)|\leq C |x|^2$, $|\nabla v_0 (x)|\leq C |x|$, and $|D^2 v_0 (x)|\leq C$ for some positive constant $C$ (there are points where $\nabla v_0$ is not differentiable, however the estimate in the second derivative is correct whenever the latter exists). A complete classification has in fact been achieved: first in \cite{Sakai} for $n=2$, but only very recently for $n\geq 6$, \cite{ESW}, and finally for the remaining cases $3\leq n \leq 5$ in \cite{EFW}. An important tool is indeed the convexity estimate which is a key idea introduced in \cite{Caffarelli-Acta}: a global solution is in fact convex. However, rather than proving the latter result, we will argue for an ``almost convexity estimate'' on the function $v$ ``prior to the blow-up'' and hence infer 

\begin{lemma}\label{l:convexity}
Every blow-up $v_0$ is a convex function. 
\end{lemma}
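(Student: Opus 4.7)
The plan is to prove a quantitative almost-convexity estimate for $v$ near the free boundary and then pass to the blow-up. Fix a unit vector $e\in\mathbb R^n$ and set $w:=\partial_{ee} v$; by \eqref{e:reformulation} the function $w$ is a classical harmonic function on $\Omega=\{v>0\}$. The target estimate is that, at every $x_0\in D\cap\partial\Omega$, there is a modulus $\delta(r)\downarrow 0$ with
\[
w(x)\ \geq\ -\delta(r)\qquad \forall\, x\in B_r(x_0)\cap\Omega.
\]

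Granting this estimate, the conclusion is almost immediate. The rescaling $v_r(x)=r^{-2}v(x_0+rx)$ preserves second derivatives pointwise, $\partial_{ee}v_r(x)=\partial_{ee}v(x_0+rx)$, so on any fixed ball $B_R$ we get $\partial_{ee}v_r\geq -\delta(Rr)\to 0$ as $r\downarrow 0$. Combined with the smooth convergence $v_{r_k}\to v_0$ on compact subsets of $\Omega_0$ (established at the beginning of Section~\ref{s:sei}), this yields $\partial_{ee}v_0\geq 0$ on $\Omega_0$. Letting $e$ range over the unit sphere gives $D^2 v_0\geq 0$ on $\Omega_0$. To upgrade to convexity on all of $\mathbb R^n$, recall that $v_0\in C^{1,1}(\mathbb R^n)$ vanishes identically, with zero gradient, off $\Omega_0$: restricted to any line $\ell$, $v_0$ is thus a $C^{1,1}$ function of one variable whose pointwise a.e.\ second derivative is nonnegative (nonneg inside $\Omega_0$ by what we just proved, zero on the open set $\ell\setminus\overline{\Omega_0}$, and the $C^{1,1}$ regularity forbids any singular negative contribution at the interface). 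Hence $v_0|_\ell$ is convex on every line, so $v_0$ itself is convex.

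The main obstacle is of course producing the quantitative estimate on $w$. The natural route is a maximum-principle argument: $w$ being harmonic on $\Omega$, its infimum over $B_r(x_0)\cap \Omega$ is controlled by its boundary values on $\partial B_r\cap\overline\Omega$ (where reasonable generic bounds hold) and on $B_r\cap\partial\Omega$. At those free boundary points lying in the interior of the contact set $\{v=0\}$, $v$ vanishes in a full neighborhood and so $w=0$ there; but at the remaining points of $\partial\Omega$ the free boundary can a priori be extremely irregular and $w$ need not even extend continuously, so one cannot simply read off the boundary data. I expect Caffarelli's decisive move to be the construction of an explicit barrier which encodes both the lower bound on the growth of $v$ (Lemma~\ref{l:lower-growth}) and Frehse's upper bound, producing a comparison function that, evaluated at scale $r$, yields a quantitative modulus $\delta(r)$ with $\delta(r)\downarrow 0$; the Harnack inequality is the natural tool to propagate such barrier information across the irregular portion of $\partial\Omega$.
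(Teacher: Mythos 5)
Your overall structure is exactly the paper's: prove a quantitative ``almost convexity'' bound $D_{ee}v\ge -\omega(r)$ on $B_r(x_0)$ with $\omega(r)\downarrow 0$, then transfer it to the blow-up by scale-invariance of second derivatives and smooth interior convergence. Your passage from $D^2 v_0\ge 0$ on $\Omega_0$ to global convexity of $v_0$ (by restricting to lines and using $C^{1,1}$ to rule out singular negative mass at $\partial\Omega_0$) is correct and in fact fills in a step the paper leaves implicit.

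The genuine gap is the quantitative estimate itself, which is the entire mathematical content of the lemma and which you only guess at. You propose a ``barrier construction'' that would somehow combine the nondegeneracy bound of Lemma~\ref{l:lower-growth} with Frehse's estimate; this is not how the argument goes, and the nondegeneracy lower bound plays no role here. The actual mechanism has two steps. First, an elementary Taylor-expansion argument from a free boundary point $y$ (where $v(y)=0$ and $\nabla v(y)=0$) combined with Frehse's $|D^2v|\le C_0$ shows that for each small $\delta$ there is a point $\bar x_1$ in $B_{(1-\delta/2)r'}(x_0)$ (with $r'=\dist(x_0,\partial\Omega)$) at which $D_{ee}v(\bar x_1)\ge -C\sqrt\delta$; geometrically one integrates along a short segment $[\delta x_0,\ \delta x_0+c_0\sqrt\delta\, r' e]$ starting near the free boundary. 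Second, since $D_{ee}v$ is harmonic on the ball $B_{r'}(x_0)\subset\Omega$ and $D_{ee}v+M$ is nonnegative there by the inductive hypothesis, the classical Harnack inequality between $\bar x_1$ and the center $x_0$ propagates the near-nonnegativity at $\bar x_1$ to a strict improvement of the lower bound at $x_0$: $D_{ee}v(x_0)\ge -M+\tfrac{\delta^{n-1}}{2^{n-1}}(M-C\sqrt\delta)$, and choosing $\delta\sim M^2$ gives $D_{ee}v(x_0)\ge -M(1-c_1 M^{2(n-1)})$. Iterating this improvement along dyadic scales drives the lower bound to zero, producing the modulus $\omega(r)$. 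A maximum-principle/barrier comparison as you sketch would need boundary control of $D_{ee}v$ along the (a priori wild) free boundary, which is precisely what is unavailable; Caffarelli's insight is to sidestep this by comparing interior values via Harnack rather than reading off boundary data.
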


The proof will be given in Section \ref{s:almost-convexity}, where we will show the existence of some nonnegative function $r\mapsto \omega (r)$ which converges to $0$ as $r\downarrow 0$ with the property that, for every unit vector $e\in \mathbb R^2$, $D^2_{ee} v\geq - \omega (r)$ on $B_r (0)$.  Before coming to that let us however explore some consequences of Lemma \ref{l:convexity}

\subsection{Subdivision of the free boundary $\partial \Omega$} Assuming Lemma \ref{l:convexity} we now can conclude that $\{v_0=0\}= \mathbb R^n \setminus \Omega_0$ is a convex set. We further distinguish two cases:
\begin{itemize}
\item The blow-up is degenerate, namely the convex set $\{v_0=0\}$ has no interior points. Given that it is convex and contains the origin, it is necessarily contained in an hyperplane $H_0$.
\item The blow-up is nondegenerate, namely the convex set $\{v_0=0\}$ has nonempty interior. 
\end{itemize}
We then introduce the subdivision of the free boundary in the following two pieces.

\begin{definition}\label{d:free-boundary-sub}
A point of the free boundary $D\cap \partial \Omega$ is said ``regular'' if {\em at least one} blow-up is nondegenerate.
Any point of the free boundary which is not regular will be called ``singular'' (in particular all blow-ups at a singular points are necessarily degenerate). 
\end{definition}

The terminology will be later vindicated by the following $\varepsilon$-regularity theorem, already mentioned in Section \ref{s:tre} and which will be proved in Section \ref{s:otto}.

\begin{theorem}\label{t:reg}
At any regular point $x_0\in D\cap \partial \Omega$ there is a neighborhood where $\partial \Omega$ is a $C^1$ hypersurface. 
\end{theorem}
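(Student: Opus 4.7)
The plan combines three ingredients: a classification of nondegenerate blow-ups at $x_0$, an openness argument which shows the regular set is uniformly flat at small scales, and a Harnack-type comparison which upgrades flatness to a $C^1$ modulus for the normal to $\partial\Omega$.

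\emph{Step 1: classifying nondegenerate blow-ups at $x_0$.} By hypothesis, some blow-up $v_0$ has contact set $K:=\{v_0=0\}$ which is convex (Lemma~\ref{l:convexity}) and has nonempty interior, with $0\in\partial K$. The target is
\[
v_0(x) = \tfrac{1}{2}\bigl((x\cdot e)_+\bigr)^2
\]
for some unit vector $e$. The available information is: $v_0\in C^{1,1}$ with the quadratic bounds $v_0(x)\leq C|x|^2$ and $|\nabla v_0(x)|\leq C|x|$ inherited from Frehse's theorem; $v_0\geq 0$ with $\nabla v_0=0$ on $K$; $\Delta v_0=1$ on $\{v_0>0\}$. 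The tangent cone to $K$ at $0$ is a convex cone with nonempty interior; using convexity of $v_0$ and the $C^{1,1}$ matching $v_0=|\nabla v_0|=0$ on $\partial K$, $v_0$ must vanish on this whole cone. The quadratic upper bound then forces the cone to fill an entire half-space $\{x\cdot e\leq 0\}$: any strictly smaller convex cone would leave a region where $\Delta v_0=1$ without enough room to satisfy the bound $v_0\leq C|x|^2$. Once the tangent cone is known, explicitly solving $\Delta v_0=1$ on $\{x\cdot e>0\}$ with vanishing Cauchy data on $\{x\cdot e=0\}$ and quadratic growth pins $v_0$ down to the formula above.

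\emph{Step 2: openness and flatness at small scales.} Once $v_{r_k}\to v_0=\tfrac12((x\cdot e)_+)^2$ uniformly on a large ball $B_R$, the nondegeneracy Lemma~\ref{l:lower-growth} together with this convergence forces the rescaled free boundary $r_k^{-1}(\partial\Omega-x_0)\cap B_R$ into an arbitrarily thin strip around the hyperplane $\{x\cdot e=0\}$. This flatness is an open property along $\partial\Omega$: nearby boundary points $x_0'$ are flat at the same scale, hence regular, with a blow-up normal close to $e$.

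\emph{Step 3: Harnack iteration and $C^1$ regularity.} For any direction $\xi$ close to $e$, the convexity estimate promised in Section~\ref{s:sei} yields $D_\xi v\geq -\omega(r)$ on $B_r(x_0)$ with $\omega(r)\downarrow 0$. Since $\Delta v=1$ is constant on $\Omega$, each partial derivative $D_\xi v$ is \emph{harmonic} on $\Omega$. Comparison with the model $\tfrac12((x\cdot e)_+)^2$ gives a quantitative positive value of $D_\xi v$ at interior points on the bulk side of the free boundary; the classical Harnack estimate, applied along Harnack chains inside connected components of $\Omega\cap B_r(x_0)$, then promotes this single positive value to a uniform lower bound throughout the region, absorbing the small error $\omega(r)$. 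Strict monotonicity of $v$ in every such $\xi$ forces $\partial\Omega$ to be the graph of a Lipschitz function in a whole cone of directions around $e$, so $\partial\Omega$ is a Lipschitz hypersurface near $x_0$. Finally, running the argument at every smaller dyadic scale $r=2^{-k}$ yields a sequence of normals $e_k$ and a quantitative decay $|e_{k+1}-e_k|\leq C\gamma^k$ for some $\gamma<1$; summing the geometric series shows $\{e_k\}$ is Cauchy with a H\"older modulus of continuity for the limit normal, which is a $C^{1,\alpha}$ (hence $C^1$) estimate on $\partial\Omega$ near $x_0$.

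\emph{Main obstacle.} The delicate step is Step~1: turning ``convex contact set with nonempty interior'' into ``contact set is exactly a half-space''. All other convex tangent cones (wedges, slabs, proper subcones of a half-space, etc.) must be excluded as tangent cones to $K$ at $0$, by playing off $\Delta v_0=1$ against the $C^{1,1}$ boundary conditions and the quadratic growth bound. Once Step~1 is secured, Steps~2 and~3 are quantitative incarnations of well-understood elliptic comparison principles, and the remaining work is careful bookkeeping to extract an honest modulus of continuity for the normal.
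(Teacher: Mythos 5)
Your Step~1 contains the central gap. Asserting that a nondegenerate blow-up $v_0$ must equal $\tfrac12\bigl((x\cdot e)_+\bigr)^2$ is not delivered by the nondegeneracy hypothesis, and the argument you sketch does not establish it. That the tangent cone to $K=\{v_0=0\}$ at $0$ is a half-space is essentially the $C^1$ regularity of $\partial K$ at $0$ --- the very thing being proved --- and the heuristic ``a smaller cone leaves no room for $\Delta v_0=1$ under the quadratic bound'' is not an argument: the quadratic growth bounds, the $C^{1,1}$ matching $v_0=|\nabla v_0|=0$ on $\partial K$, and convexity do not by themselves exclude a wedge or proper cone. Moreover even if the tangent cone were a half-space, $v_0$ would not be ``pinned down'' to the half-space model; a convex $K$ tangent to $0$ with half-space tangent cone could be a paraboloid or ellipsoid, yielding a global solution that satisfies every property in your list and is not $\tfrac12((x\cdot e)_+)^2$. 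The paper deliberately avoids this classification. It instead proves \emph{local} $C^{1,\alpha}$ regularity of $\partial\{v_0>0\}$ near $0$ (Proposition~\ref{p:first-regularity}): a ball interior to $K$ gives a cone of directions $W$ with $\partial_y v_0\geq 0$ by convexity, the strong maximum principle upgrades this to strict positivity in $\{v_0>0\}$, and then the \emph{boundary Harnack inequality} in Lipschitz domains, applied to ratios $\partial_{y_1}v_0/\partial_{y_2}v_0$, yields H\"older continuity of $\nabla v_0/|\nabla v_0|$. Only afterwards does one blow $v_0$ up a second time at $0$ and pass to a diagonal sequence to extract a (possibly different) blow-up of $v$ at $x_0$ equal to the half-space model (Corollary~\ref{c:one-good-bu}); that $v_0$ itself is the half-space solution and that the blow-up is unique are consequences of the final $\varepsilon$-regularity, not inputs to it.

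Step~3 has a secondary gap. You invoke interior Harnack chains to propagate positivity of $D_\xi v$ throughout $\Omega\cap B_r(x_0)$, but $D_\xi v$ vanishes on $\partial\Omega$ and Harnack-chain constants blow up near the boundary, so this controls $D_\xi v$ only at a fixed distance from the free boundary --- which is precisely where the difficulty is not. The paper's argument (following \cite{Caffarelli-JFA}) is instead a barrier/maximum-principle comparison with $k(x)=h(x)-\delta\bigl[v(x)-\tfrac{1}{2n}|x-x_0|^2\bigr]$, tailored to work all the way up to $\partial\Omega$. Likewise, the geometric decay $|e_{k+1}-e_k|\leq C\gamma^k$ is a quantitative improvement-of-flatness statement which does not follow from merely ``running the Lipschitz argument at every dyadic scale''; the paper obtains the H\"older modulus of the normal from boundary Harnack on the blow-up, without any dyadic iteration.
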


In fact the argument will show more, namely that the tangent to the free boundary is also H\"older continuous (in the PDE jargon this is the $C^{1,\alpha}$ regularity already explained). Note in passing that the $\varepsilon$-regularity will in fact imply that:
\begin{itemize}
\item The blow-up at $x_0$ is unique (the rescaled functions $v_r$ converge all to the same limit as $r\downarrow 0$);
\item The latter is in fact a one-variable function, the very ``model'' discovered in Section \ref{s:tre}. 
\end{itemize}

\subsection{Degenerate blow-ups}
Before getting to the convexity estimate and to the $\varepsilon$ regularity theory, let us however analyze what the possible shape of a degenerate blow-up $v_0$ might be. Summarizing the information gained so far, we know that, if $v_0$ us a degenerate blow-up, then there is an hyperplane $H_0$ with the following properties:
\begin{itemize}
\item[(a)] $v_0$ is $C^1$ and $C^2$ on each side of $H_0$, with bounded second derivatives;
\item[(b)] $\Delta v_0=1$ on each side of $H_0$;
\item[(c)] $v_0(0)=0$ and $\nabla v_0 (0)=0$.
\end{itemize}
Consider now the function $x\mapsto w_0 (x)= v_0 (x)-\frac{|x|^2}{2n}$: $w_0$ is smooth on each of the two open half-spaces delimited by $H_0$ (denote them by $H^+$ abd $H^-$ and it is $C^1$ over $H_0$. It is then easy to show
shows that $w$ is stationary for the Dirichlet integral on every bounded $U\subset \mathbb R^n$: we can integrate by parts the expressions $\int_{U\cap H^\pm} \nabla w_0 \cdot \nabla \psi$ for every smooth $\psi\in C^\infty_c (U)$ and the boundary terms on $U\cap H_0$ cancel out when we sum the them. In particular, by the Weyl Lemma, $w_0$ is harmonic and smooth. Observe also that its second derivatives are bounded. So the classical Liouville theorem implies that the second derivatives are constant. In particular also the second derivatives of $v_0$ are constant. Given (c) we conclude that $v_0$ is an homogeneous quadratic polynomial.

\medskip 

We just make one final remark. At a singular point $x_0$ the set $\{v=0\} \cap B_r (x_0)$ becomes ``thinner and thinner'' as $r\downarrow 0$, in fact $|\{v=0\}\cap B_r (x_0)| = o (r^{n-1})$. Pick indeed any sequence $r_k\downarrow 0$. A subsequence, not relabeled, of $v_{r_k}$ must converge uniformly to a degenerate blow up $v_0$. The latter is however positive outside some hyperplane $H_0$. In particular, for any fixed $\delta>0$, provided $k$ is sufficiently large, $\{v_{r_k}=0\}\cap B_1$ must be contained in a $\delta$ neighborhood of $H_0$. Scaling back the information to the function $v$, this translates into $\{v=0\}\cap B_{r_k} (x_0)$ being contained in a $\delta r_k$-neighborhood of $(x_0+H_0)\cap B_{r_k} (x_0)$, in turn implying that $|\{v=0\}\cap B_{r_k} (x_0)|\leq C \delta r_k^{n-1}$ for some geometric constant $C$ (which in particular is independent of $\delta$). The arbitrariness of $\delta$ shows therefore that 
\[
\lim_{r\downarrow 0} \frac{|\{v=0\}\cap B_r (x_0)|}{r^{n-1}} = 0\, .
\] 

\subsection{Proof of the convexity estimate}\label{s:almost-convexity} We now wish to sketch a proof of Lemma \ref{l:convexity}. In order to avoid technicalities we will always assume that we are dealing with points ``well inside $D$'', in such a way that their closest point in $\partial \Omega$ is also in the interior of $D$.

We start with the following very elementary observation.

\begin{lemma}\label{l:elem}
Assume that $v (x_0)>0$ and that $\partial B_r (x_0)$ touches $\partial \Omega$ at a point $y\in D$, while $B_r (x_0)\subset \Omega$. Then for every unit vector $e$ and for any $\delta< \frac{1}{2}$ there is a point $\bar x_1$ in $B_{(1-\delta/2) r} (x_0)$ at which we have the estimate
\[
D_{ee} v (\bar x_1) \geq - C \sqrt{\delta}
\]  
where the constant $C$ depends only on the estimate in Frehse's $C^{1,1}$ theorem. 
\end{lemma}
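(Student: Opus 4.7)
The plan is to argue by contradiction: assume $D_{ee} v(x) \leq -\lambda$ for all $x \in B_{(1-\delta/2) r}(x_0)$ and show that this forces $\lambda \leq C \sqrt{\delta}$, for $C$ depending only on the Frehse constant $M := \|D^2 v\|_{\infty}$. The key inputs are that at the contact point $y$ we have $v(y) = 0$ and $\nabla v(y) = 0$ (both because $y \in \partial \Omega$ and $v \in C^{1,1}$ with $v, \nabla v$ vanishing on $\partial \Omega$), so Taylor's expansion together with Frehse yields
\[
0 \leq v(x) \leq \tfrac{M}{2} |x - y|^2, \qquad |\nabla v(x)| \leq M |x - y|,
\]
while on chords we crucially use $v \geq 0$ everywhere on $\overline{B_r(x_0)}$.

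For directions $e$ that are ``mostly tangential'' to $\partial B_r(x_0)$ at $y$, I would pick the reference point $z := y - \sqrt{\delta}\, r\, \nu = x_0 + (1 - \sqrt{\delta})\, r\, \nu$, where $\nu$ is the outward unit normal at $y$. This point lies in $B_{(1-\delta/2) r}(x_0)$ because $\sqrt{\delta} \geq \delta/2$ for $\delta < 1/2$, and the bounds above give $v(z) \leq M \delta r^2/2$ and $|\nabla v(z)| \leq M \sqrt{\delta}\, r$. Consider the chord $\gamma(t) := z + t e$ inside $\overline{B_r(x_0)}$: an elementary computation shows it has half-length $B$ satisfying $B^2 \geq 2 r (\sqrt{\delta}\, r) - (\sqrt{\delta}\, r)^2 \gtrsim \sqrt{\delta}\, r^2$, and its intersection with $B_{(1-\delta/2) r}(x_0)$ contains a symmetric interval $[-s, s]$ with $s \gtrsim r\, \delta^{1/4}$. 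Setting $g(t) := v(\gamma(t))$ and using $g'' \leq -\lambda$ on $[-s, s]$, the Taylor expansions at $\pm s$ (whose sum cancels $g'(0)$) give $g(s) + g(-s) \leq 2 g(0) - \lambda s^2 \leq M \delta r^2 - \lambda s^2$. A matching lower bound for $g(\pm s)$ is produced by propagating $g(\pm B) \geq 0$ inward through the outer intervals $[s, B]$ and $[-B, -s]$, where only the universal estimate $|g''| \leq M$ is in force; a careful bookkeeping (the cross terms in $g'(0)$ again cancel in the sum) gives $g(s) + g(-s) \geq 2 \lambda s (B - s) - M (B - s)^2$. Combining the two bounds yields
\[
\lambda\, s\, B \;\lesssim\; M \delta r^2 + M (B - s)^2,
\]
whence $\lambda \leq C(M) \sqrt{\delta}$ after substituting $s, B \gtrsim r\, \delta^{1/4}$ and $B - s \lesssim r\, \delta^{3/4}$.

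The complementary regime, where $e$ has a large radial component, needs an analogous but separate argument, based on the chord through $y$ itself in direction $e$ (which, once $|e \cdot \nu|$ is not too small, meets $B_{(1-\delta/2) r}(x_0)$ in an interval $[T_-, T_+]$ with $T_{\pm} = r |e\cdot\nu| \pm r \sqrt{(e\cdot\nu)^2 - \delta + O(\delta^2)}$). Here $h(t) := v(y + t e)$ satisfies $h(0) = h'(0) = 0$ and $h \geq 0$; propagating these vanishing conditions through $[0, T_-]$ with only $|h''| \leq M$, then using $h'' \leq -\lambda$ on $[T_-, T_+]$ with $h(T_+) \geq 0$, again yields $\lambda \leq C(M) \sqrt{\delta}$. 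The threshold $|e \cdot \nu| \sim \delta^{1/4}$ is precisely where both arguments give the same rate. The main obstacle is exactly this uniformity in $e$: neither chord choice works for every direction, and the natural geometric scale $r\, \delta^{1/4}$ of the admissible segment inside the smaller ball (as opposed to the bigger one) is what dictates the exponent $1/2$ on $\delta$---in short, tangential directions ``lose half the power of $\delta$'' through the gap between the two radii.
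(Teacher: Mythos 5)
Your proposal is correct, but it follows a genuinely more complicated route than the paper and it is worth pausing on why. You anchor the Taylor argument at $z = y - \sqrt{\delta}\,r\,\nu$, at distance $\sqrt{\delta}\,r$ from the touching point, in order to exploit the longer tangential chord of length $\sim \delta^{1/4} r$ inside $B_{(1-\delta/2)r}(x_0)$; this forces a two-sided second-order Taylor expansion with ``propagation'' through the outer annular pieces of the chord (where only $|D_{ee}v|\le M$ is available), and it fails outright when $e$ is mostly radial, so you are led to a second, separate argument along the chord through $y$ itself. Both cases close, the threshold $|e\cdot\nu|\sim\delta^{1/4}$ is located correctly, and the bookkeeping (cancellation of $g'(0)$, the estimates $s,B\gtrsim r\delta^{1/4}$ and $B-s\lesssim r\delta^{3/4}$) is sound; so the proof works. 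The paper's proof, by contrast, is uniform in $e$ and one-sided: it starts from the point $\delta x_0$, which lies on the radius from $y$ to $x_0$ at distance $\delta r$ from $y$ (hence at distance $(1-\delta)r$ from $x_0$), and notes that for a fixed geometric $c_0$ one of the two segments $[\delta x_0, \delta x_0 \pm c_0\sqrt{\delta}\,r\,e]$ always fits inside $B_{(1-\delta/2)r}(x_0)$ --- the $\pm$ freedom being harmless because $D_{ee}v = D_{(-e)(-e)}v$. Since $\delta x_0$ is at distance only $\delta r$ from the contact point, the Frehse bound gives $v(\delta x_0)\lesssim (\delta r)^2$ and $|D_e v(\delta x_0)|\lesssim \delta r$, and a single application of $0\le v(x_1)\le v(\delta x_0)+c_0\sqrt{\delta}\,r\,|D_ev(\delta x_0)|+\tfrac12 c_0^2\delta r^2 \sup_{\sigma} D_{ee}v$ gives $\sup_\sigma D_{ee}v\ge -C\sqrt{\delta}$ in one line, with no case distinction and no treatment of outer intervals. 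The lesson is that the ``right'' choice of starting point is not the one that maximizes the chord length in the smaller ball (your $\delta^{1/4}r$) but the one that makes the Taylor data as small as possible while leaving a chord of length $\sqrt{\delta}r$ available in \emph{every} direction $\pm e$; the $\pm$ freedom is exactly what removes the angular case split. Finally, note a small imprecision in your write-up: for $e$ not exactly tangential the chord through $z$ is not centered at $z$, so ``half-length $B$'' and the symmetric interval $[-s,s]$ need to be defined with a bit of care (taking the shorter side, say); this does not break the argument but should be made explicit.
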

\begin{proof}
By translation and rotation we can assume $y=0$. Now for an appropriate geometric constant $c_0>0$ we are guaranteed that $B_{(1-\delta/2) r} (x_0)$ contains at least one of the two segments $\sigma_1 = [\delta x_0, \delta x_0 + c_0 \sqrt{\delta} r e]$ and $\sigma_2 = [\delta x_0, \delta x_0 - c_0 \sqrt{\delta} r e]$. In fact the ``worst case'' is when $e$ is perpendicular to the vector $x_0$, cf. Figure \ref{fig:2}.

\begin{figure}[htbp]
\begin{center}
\input{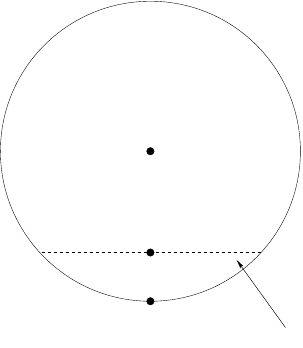_t}
\end{center}
\caption{The segment $\delta_1$ when $e$ is orthogonal to $x_0$.}\label{fig:2}
\end{figure}

Up to change of sign of $e$ (which does not affect the statement of the lemma, since the second partial derivative $D_{ee} v$ remains the same!) assume we are in the first case and set $x_1= \delta x_0 + c_0 \sqrt{\delta} r e$. Applying the fundamental theorem of calculus twice on the segment it is easy to see that 
\[
0\leq v (x_1) \leq v (\delta x_0) + c_0 \sqrt{\delta} r |D_e v (\delta x_0)| + \tfrac{1}{2} c_0^2 \delta r^2 \sup_{\sigma_1} D_{ee} v\, . 
\]  
Consider that, since $v(0)=0$, $\nabla v (0)=0$ and $\sup |D^2v| \leq C_0$ by Frehse's theorem, 
\begin{align*}
v (\delta x_0) &\leq C_0 (\delta r)^2/2 \\
|D_e v (\delta x_0)| &\leq C_0 \delta r\, .
\end{align*}
We therefore conclude that 
\[
 \tfrac{1}{2} c_0^2 \delta r^2 \sup_{\sigma_1} D_{ee} v \geq - \tfrac{1}{2} C_0 (\delta r)^2 - C_0 \delta^{\frac{3}{2}} r^2\, .
\]
Since $\sigma_1 \subset B_{(1-\delta/2) r} (x_0)$ the claim of the lemma readily follows.
\end{proof}

Fix now a vector $e$ with norm $1$. 
We wish to use the above lemma and the Harnack estimate to prove the following statement 
\begin{itemize}
\item[(I)] Assume that for some $r>0$ and for some number $M>0$ we have the estimate $D_{ee} v (x) > - M$ for every $x$ at distance at most $r$ from $D\cap \partial \Omega$ and sufficiently far away from $\partial D$. Then for every $x$ at distance at most $r/2$ from $D\cap \partial \Omega$ and still sufficiently far away from $\partial D$ we have the improved inequality
\[
D_{ee} v (x) \geq -M + c_1 M^{2n-1} = - M \big(1-c_1 M^{2n-2}\big)\, ,
\] 
where $c_1$ depends only on geometric constants and the estimate of Frehse's theorem. 
\end{itemize}
I will not attempt at specifying what exactly ``sufficiently far away from $\partial D$'' means, the interested reader will be able to fill in the details after I present the argument. Before coming to the proof of the claim (I), we finish the proof of the convexity estimate. Recall that Frehse's theorem gives us some uniform bound on the absolute value of $D^2 v$, so we do know that $D_{ee} v > - M_0$ for some $M_0$. We can then pick some $r_0>0$ and keep ``far away from'' $\partial D$ to infer 
\[
D_{ee} v (x) \geq -M_0 + c_1 M_0^{2n-1} = - M_0 (1-c_1 M_0^{2(n-1)}) =: - f (M_0)
\]
when $x$ is at distance at most $\frac{r}{2}$ from $D\cap \partial \Omega$. 
If $f (M_0)$ were negative or $0$ we would be rather happy (because we would immediately conquer $D_{ee} v \geq 0$ in a neighborhood of $D\cap \partial \Omega$ and away from $\partial D$). Otherwise we can keep iterating, i.e. at distance $2^{-k} r_0$ we will have $-f^k (M_0)$ as lower bound, where $f^k = f \circ \ldots \circ f$ is the $k$-th iterate of the map $f$. Observe that, because we are assuming $f (M_0)>0$, we have immediately $0<f(M_0)<M_0$, hence also $0< f^2 (M_0) < f(M_0)$, and so on: $f^k (M_0)$ stays always positive. It is now pretty simple to argue that the monotone sequence $M_k = f^k (M_0)$ must in fact converge to $0$, which means that our lower bound on $D_{ee} v$ will approach zero as we get closer and closer to $D\cap \partial \Omega$ (but staying away from $\partial D$!). Indeed if it were $M_\infty = \inf_k M_k = \lim_k M_k >0$, then setting 
\[
\gamma := (1-c_1 M_\infty^{2(n-1)})
\]
we would immediately conclude the recursive estimate $M_k \leq \gamma M_{k-1} \leq \ldots \leq \gamma^k M_0$ and hence infer that $M_k$ not only converges to $0$, but it converges geometrically fast to it (in fact the actual converge of $M_k$ is pretty slow). 

It remains to show (I): the deus ex machina is the celebrated Harnack's estimate, one of the workhorse of the theory of elliptic PDEs in the last century or so. For harmonic functions it amounts to the following statement, proved by Harnack in the nineteenth century.

\begin{theorem}\label{t:Harnack}
Assume $f$ is a nonnegative harmonic function on $B_R (x_0)\subset \mathbb R^n$ and let $\bar x_1$ be a point in $B_r (x_0)$ with $r<R$. Then 
\[
f (\bar x_1) \frac{1+(\tfrac{r}{R})^{n-1}}{1-\tfrac{r}{R}} \geq f(x_0) \geq \frac{1 - (\tfrac{r}{R})^{n-1}}{1+\tfrac{r}{R}} f (\bar x_1)\, .
\]
\end{theorem}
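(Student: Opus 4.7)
The approach I would adopt is the classical one via the Poisson integral representation. For $f$ harmonic on $B_R(x_0)$ and continuous up to the boundary (the general case follows by working on $B_{R'}(x_0)$ with $R'\uparrow R$ and passing to the limit),
\[
f(z)=\int_{\partial B_R(x_0)}P(y,z)\,f(y)\,d\sigma(y),\qquad P(y,z)=\frac{R^2-|z-x_0|^2}{\sigma_{n-1}R\,|y-z|^n}.
\]
Specialising to $z=x_0$ gives $P(y,x_0)=\frac{1}{\sigma_{n-1}R^{n-1}}$, a constant in $y$, and recovers the mean value property $f(x_0)=\dashint_{\partial B_R(x_0)}f\,d\sigma$. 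Because $f\ge 0$, any pointwise inequality $\alpha\,P(y,x_0)\le P(y,z)\le \beta\,P(y,x_0)$ integrates against $f(y)\,d\sigma(y)$ to $\alpha\,f(x_0)\le f(z)\le\beta\,f(x_0)$, so the whole problem is reduced to a uniform pointwise comparison between the two Poisson kernels.

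The pointwise estimate is entirely elementary. Writing $\rho=|z-x_0|$, the triangle inequality gives $R-\rho\le|y-z|\le R+\rho$ for every $y\in\partial B_R(x_0)$, and the factorisation $R^2-\rho^2=(R-\rho)(R+\rho)$ lets us cancel one such factor against the numerator of $P(y,z)$. Plugging these bounds into the kernel and simplifying yields two-sided estimates on $\sigma_{n-1}R^{n-1}P(y,z)$ of the shape $R^{n-2}(R\mp\rho)/(R\pm\rho)^{n-1}$. Using $\rho\le r$ with the monotonicity of each bound in $\rho$, and writing $t=r/R$, this produces a Harnack-type two-sided comparison between $f(\bar x_1)$ and $f(x_0)$ with constants of shape $(1\mp t)/(1\pm t)^{n-1}$.

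The remaining step is to rewrite these Harnack constants in the algebraic form displayed in the statement. For $n=2$ this is automatic since $(1-t)^{n-1}$ and $1-t^{n-1}$ both reduce to $1-t$; for general $n$ one exploits the factorisation $1-t^{n-1}=(1-t)(1+t+\cdots+t^{n-2})$ together with an analogous arrangement of $1+t^{n-1}$ to rewrite the Poisson-kernel output in the displayed form. I expect this algebraic rewriting -- matching the powers of $1\pm t$ produced by the pointwise Poisson estimate against the shape $(1\pm t^{n-1})/(1\mp t)$ appearing in the statement -- to be the only delicate step. The analytic content (Poisson representation, pointwise kernel bounds, and nonnegativity of $f$ to integrate them) is completely classical and poses no conceptual obstacle.
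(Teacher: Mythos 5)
Your analytic skeleton --- the Poisson representation on $B_{R'}(x_0)$, the pointwise comparison of $P(y,z)$ with the constant kernel $P(y,x_0)$ via $R'-\rho\le |y-z|\le R'+\rho$, integration against $f\,d\sigma\ge 0$, and the limit $R'\uparrow R$ --- is the standard argument and is correct; the paper itself offers no proof, attributing the estimate to Harnack, so there is nothing to compare on that side. The genuine problem lies precisely in the step you defer as ``the only delicate step.'' The constants your computation produces, namely
\[
\frac{1-t}{(1+t)^{n-1}}\, f(x_0)\;\le\; f(\bar x_1)\;\le\;\frac{1+t}{(1-t)^{n-1}}\, f(x_0),\qquad t=\tfrac rR,
\]
cannot be rewritten in the displayed form: $\frac{(1-t)^{n-1}}{1+t}$ and $\frac{1-t^{n-1}}{1+t}$ are different quantities for $n\ge 3$, and since $(1-t)^{n-1}<1-t^{n-1}$ the displayed constant is \emph{strictly stronger} than the sharp Poisson constant. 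In fact the theorem as printed is false. For $n=3$ take $R=1$, $x_0=0$, $f(x)=(1-|x|^2)/|x-e_3|^{3}$ (a Poisson kernel, hence nonnegative and harmonic on $B_1$), and $\bar x_1=\tfrac12 e_3$, so that $f(0)=1$ and $f(\bar x_1)=6$; with $t$ slightly above $\tfrac12$ the printed lower bound would force $1=f(0)\ge\frac{1-t^{2}}{1+t}\cdot 6\approx 3$. Your constants give exactly $f(0)\ge\frac{(1-t)^{2}}{1+t}\cdot 6\to 1$: the Poisson kernel is the extremal case.

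The resolution is that the statement contains a misprint: the intended classical inequality has $\bigl(1\pm\tfrac rR\bigr)^{n-1}$ where the paper writes $1\pm\bigl(\tfrac rR\bigr)^{n-1}$. This is confirmed by how the estimate is used later in the paper: applying Harnack with $r/R=1-\delta/2$ to extract a factor of order $\delta^{n-1}/2^{n-1}$ is exactly what $\bigl(1-\tfrac rR\bigr)^{n-1}$ yields, whereas $1-\bigl(\tfrac rR\bigr)^{n-1}$ would give a factor of order $\delta$, not $\delta^{n-1}$. So your proof is in fact complete once you stop at the Poisson constants; the attempted algebraic conversion via $1-t^{n-1}=(1-t)(1+t+\dots+t^{n-2})$ should be abandoned, since on the other side it would require $(1+t)^{n-1}\le 1+t^{n-1}$, which fails for every $0<t<1$ and $n\ge 3$.
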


Now, to the proof of (I). Fix $r$ and $M$ as in the statement and let $x_0$ be a point at distance at most $\frac{r}{2}$ from $\partial \Omega$. In fact let $r'$ be the largest radius centered at $x_0$ over which $v$ is positive, so that $\partial B_{r'} (x_0)$ touches $D\cap \partial \Omega$. Observe that $D_{ee} v$ is an harmonic function in $B_{r'} (x_0)$ (since $\Delta v \equiv 1$ in it, any partial derivative of $v$ is harmonic!). Moreover $D_{ee} v+M$ is positive by assumption (because $B_{r'} (x_0)$ is contained in the $r$-neighborhood of $\partial \Omega$). We apply now the Harnack inequality between $x_0$ and the point $\bar x_1$ found in Lemma \ref{l:elem}, where for the moment $\delta$ is not yet chosen. Then we find
\[
D_{ee} v (x_0) \geq -M + \tfrac{\delta^{n-1}}{2^{n-1}} (M - C\sqrt{\delta})\, .
\] 
We now specify $\delta$ to be smaller than $\frac{M^2}{4C^2}$ and, of course, smaller than $1$ (i.e. we set it equal to the minimum of the two) and the desired conclusion follows. 

\section{The $\varepsilon$-regularity theorem}\label{s:sette}

Our task is now to prove Theorem \ref{t:reg}. We will in fact first sketch a proof of the regularity of the free boundary for a nondegenerate blow-up. Consequently we will leverage the latter to transfer the same amount of regularity to a rescaling which is sufficiently close to it. 

We fix therefore a nondegenerate blow-up $v_0$. Recall that that on $v_0: \mathbb R^n \to [0, \infty)$ we have therefore the following pieces of information.
\begin{itemize}
\item[(a)] $v_0$ is convex, $C^1$, and enjoys a uniform upper bound on the second derivative;
\item[(b)] $\Delta v_0=1$ on $\{v_0>0\}$;
\item[(c)] $v_0 (0)=0$ and $\nabla v_0 (0)=0$;
\item[(d)] $\max \{v_0 (x): |x|\leq r\} \geq (2n)^{-1} r^2$;
\item[(e)] the convex set $\{v_0=0\}$ has nonempty interior.
\end{itemize}
Note that just (e) and classical properties of convex sets guarantee the existence of a radius $r_0>0$ with the property that, up to a change of coordinates, there is a domain $U\subset \mathbb R^{n-1}$ and a concave function $f: U \to \mathbb R$ such that 
\[
\{v_0=0\}\cap B_{r_0} (0) = \{(x', x_n)\in B_{r_0} (0): x'\in U \;\mbox{and}\; x_n\leq f (x')\}\, .
\] 
The goal of this section is to improve the statement to continuous differentiability of the function $f$. In fact the proof will also give a H\"older modulus of continuity for the derivative. The following is the explicit statement.

\begin{proposition}\label{p:first-regularity}
For a sufficiently small $r_0>0$ the set $\partial \{v_0>0\}\cap B_{r_0} (0)$ is a $C^{1,\alpha}$ hypersurface.
\end{proposition}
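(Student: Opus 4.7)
The plan is, in suitable local coordinates, to write the free boundary as the graph $x_n=f(x')$ and show that $\nabla f$ is H\"older continuous. The strategy will have two stages: the convexity of $v_0$ (Lemma \ref{l:convexity}) will first produce directional monotonicity and hence Lipschitz regularity of $f$, and a dyadic iteration driven by Harnack's inequality (Theorem \ref{t:Harnack}) will then upgrade this to $C^{1,\alpha}$.

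\emph{Coordinates and Lipschitz regularity.} Since $\{v_0=0\}$ is convex with nonempty interior and $0\in\partial\{v_0=0\}$, it has a supporting hyperplane at $0$; I would rotate so that this hyperplane is $\{x_n=0\}$ and $\mathrm{int}\{v_0=0\}$ locally lies in $\{x_n<0\}$, and note that interior points of the contact set accumulate at $0$. Fix a small $\eta_0>0$ and let $\mathcal{C}_0$ denote the cone of unit vectors $e$ with $\langle e,e_n\rangle>1-\eta_0$. For $x\in B_{r_0}\cap\{v_0>0\}$ and $e\in\mathcal{C}_0$, the ray $\{x-te:t\ge 0\}$ enters the interior of the contact set at some $t_0>0$; hence $h(t):=v_0(x-te)$ is convex, nonnegative and vanishes at $t_0$, so $h$ is nonincreasing on $[0,t_0]$, and therefore $D_ev_0(x)=-h'(0)\ge 0$. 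Since $v_0\equiv 0$ on the contact set, this yields $D_ev_0\ge 0$ on $B_{r_0}$ for every $e\in\mathcal{C}_0$, and the usual argument then shows that $\partial\{v_0>0\}\cap B_{r_0}$ is a Lipschitz graph $x_n=f(x')$ and that $\{v_0>0\}\cap B_{r_0}$ is a Lipschitz domain with a uniform interior cone condition.

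\emph{Dyadic cone improvement.} For $r_k:=2^{-k}r_0$ let $\mathcal{C}_k$ be the cone of unit vectors $e$ with $D_ev_0\ge 0$ on $B_{r_k}$. Because $\Delta v_0=1$ on $\{v_0>0\}$, the function $w_e:=D_ev_0$ is harmonic there and vanishes on the contact set. The core of the argument will be to show that $\mathcal{C}_k$ strictly enlarges in $k$ at a geometric rate. For $e\in\partial\mathcal{C}_k$, nondegeneracy (Lemma \ref{l:lower-growth}) combined with Harnack's inequality applied along interior chains of balls in the Lipschitz domain $\{v_0>0\}\cap B_{r_k}$ will produce a quantitative lower bound of the form $w_e\ge c_1 r_k$ on a ``central'' subset $S_k\subset B_{r_k/2}\cap\{v_0>0\}$. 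Frehse's $C^{1,1}$ estimate gives $|w_{e'}-w_e|\le C_2|e'-e|r_k$, so for $e'$ at angular distance at most $c_1/(2C_2)$ from $e$ one retains $w_{e'}\ge\tfrac12 c_1 r_k>0$ on $S_k$. A final comparison/Harnack-chain step will propagate this interior positivity to $w_{e'}\ge 0$ on all of $B_{r_{k+1}}$, establishing $\mathcal{C}_{k+1}\supsetneq\mathcal{C}_k$ with the aperture of the bad cone contracting by a fixed factor $\lambda<1$.

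\emph{H\"older modulus and main obstacle.} Writing $\nu_k$ for the axis of $\mathcal{C}_k$, the cone improvement gives $|\nu_{k+1}-\nu_k|\le C\lambda^k$, so $\nu_k\to\nu_\infty$ at the geometric rate $|\nu_k-\nu_\infty|\le Cr_k^\alpha$ with $\alpha=-\log_2\lambda$. Running the same analysis at every free boundary point close to $0$ produces a H\"older continuous field of outward unit normals, which in graph coordinates is exactly the statement $f\in C^{1,\alpha}$. I expect the hard part to be the quantitative lower bound $w_e\ge c_1 r_k$ on $S_k$ and its subsequent propagation to $w_{e'}\ge 0$ on $B_{r_{k+1}}$: the domain $\{v_0>0\}$ is only Lipschitz, $w_{e'}$ may degenerate at some free-boundary points, and the Harnack chains must be set up with care. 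This is exactly the type of delicate Harnack-based iteration that appeared in Section \ref{s:almost-convexity} and is the signature tool of \cite{Caffarelli-Acta}.
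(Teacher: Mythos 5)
Your Lipschitz step is essentially the paper's Section 7.1: both use the convexity of $\{v_0=0\}$ together with an interior ball (or, in your phrasing, the interior points accumulating at the origin) to get $D_e v_0\ge 0$ for all $e$ in a fixed cone, and then deduce that the free boundary is a Lipschitz graph. Where you diverge genuinely is the $C^{1,\alpha}$ upgrade. The paper's Section 7.2 treats the ratio $\frac{\partial_{y_1}v_0}{\partial_{y_2}v_0}$ for $y_1,y_2$ in the monotonicity cone and invokes the \emph{boundary Harnack inequality in Lipschitz domains} (citing \cite[Thm.\ 11.6]{Caffarelli-Salsa}) to get H\"older continuity of $\nabla v_0/|\nabla v_0|$ directly. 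You instead propose a dyadic cone-improvement iteration, avoiding boundary Harnack as a black box; this is closer in spirit to the hands-on argument of \cite{Caffarelli-Acta}. What the paper's route buys is brevity (the entire delicate iteration is hidden in one cited theorem); what yours would buy is a self-contained argument from interior Harnack and the maximum principle.

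That said, two of your key steps are not just ``delicate but routine'' --- they contain genuine gaps as written. First, the propagation of $w_{e'}>0$ from the central set $S_k$ to $w_{e'}\ge 0$ on all of $B_{r_{k+1}}$ is \emph{not} a Harnack-chain step: Harnack gives comparability of positive harmonic functions only at interior points bounded away from the boundary, and precisely near $\partial\{v_0>0\}$, where $w_{e'}$ degenerates, it says nothing. What one needs there is a barrier comparison of the type the paper actually uses in the proof of Theorem \ref{t:real-eps-reg} (compare $h$ with $\delta\big[v-\tfrac{1}{2n}|x-x_0|^2\big]$ and invoke the maximum principle on $\Omega\cap B_{1/4}(x_0)$); that argument, and not a chain of Harnack balls, is the mechanism that converts ``positive at interior scale $\sigma$, slightly negative overall, vanishing on the boundary'' into ``nonnegative throughout.'' Second, the claimed uniform bound $w_e\ge c_1 r_k$ on $S_k$ for $e\in\partial\mathcal{C}_k$ cannot hold with $c_1$ independent of $k$: for $e$ on the cone boundary one only has $w_e\ge 0$ a priori, and as the cone aperture $\theta_k$ opens toward $\pi/2$ the lower bound must degrade, roughly like $(\pi/2-\theta_k)\,r_k$. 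That degradation is exactly what makes the aperture deficit contract geometrically (rather than by a fixed amount) and is the source of the exponent $\alpha<1$; if the bound were uniform you would get $C^{1,1}$ regularity of the free boundary, which is false. Proving the correct scale-dependent bound is, in effect, reproving the boundary Harnack inequality, so you have not sidestepped the hard PDE input so much as relocated it.
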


Before coming to the proof of Proposition \ref{p:first-regularity} we pause for a moment to observe a quite interesting corollary. Obviously, $\partial \{v_0>0\}$ has a tangent plane $H_0$ at the point $0$. Assume w.lo.g. that $H_0=\{x_n=0\}$ and that the convex set $\{v_0=0\}$ lies below $H_0$. We can now further ``blow-up'' $v_0$ at the origin, namely look at the uniform limits of the rescalings
\[
v_{0,r} (x) := \frac{v_0 (rx)}{r^2}\, .
\]
All the considerations which applied to the blow-ups of $v$ apply now to any blow-up $v_{0,0}$ of $v_0$: the latter also satisfies all the properties (a)-(e) listed above. Moreover the uniform convergence of the rescalings and the regularity just gained for the free boundary of $\{v_0=0\}$ imply that $\{v_{0,0}=0\}$ is in fact the half-space $\{x_n \leq 0\}$. Consider now on the upper half-space the harmonic function
\[
h  (x) := v_{0,0} (x) - \frac{x_n^2}{2}\, .
\]
The latter is identically $0$ on $H_0$ and by a Schwartz reflection we can extend it harmonically to all of $\mathbb R^n$. The quadratic lower bound and Liouville's theorem imply that $h$ is a second order polynomial, but in fact it has to be homogenous and quadratic because $h_{0,0} (0)=0$ and $\nabla h_{0,0} (0)=0$. However a quadratic polynomial which vanishes on the hyperplane $H_0$ is necessarily a function of the variable $x_n$ only and, because of the harmonicity, this forces $h$ to vanish identically. So there is a unique blow-up of $v_0$ at the origin and it is the function
\begin{equation}\label{e:v00}
v_{0,0} (x) = 
\left\{
\begin{array}{ll}
{\textstyle{\frac{x_n^2}{2}}} \qquad & \mbox{if $x_n>0$}\\
0 & \mbox{otherwise}
\end{array}
\right.
\end{equation}
Now, $v_0$ is the limit of some sequence $v_{r_k}$ with $r_k\downarrow 0$, while we just inferred that $v_{0,0}$ is the limit of $v_{0,s}$ for $s\downarrow 0$. It is a simple routine exercise to extract a diagonal sequence 
$r_k s_k$ with $s_k\downarrow 0$ such that $v_{r_k s_k}\to v_{0,0}$. In other words we just inferred that at a regular point at least one blow-up is the ``expected model'' of Section \ref{s:tre}. We pause for a second to state the corollary we just found.

\begin{corollary}\label{c:one-good-bu}
If $x_0\in D\cap \partial \{v>0\}$ is a regular point, then, up to applying a rotation, at least one blow-up at $0$ is the function $v_{0,0}$ of \eqref{e:v00}. 
\end{corollary}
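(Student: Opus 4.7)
The plan is to apply the blow-up machinery a second time, now to the blow-up $v_0$ itself, and to exploit the regularity of the free boundary established in Proposition \ref{p:first-regularity} to pin down the zero set of the resulting ``second blow-up''. Since $x_0$ is regular, by definition there is a nondegenerate blow-up $v_0$ at $x_0$, and by Proposition \ref{p:first-regularity} the set $\partial\{v_0>0\}$ is a $C^{1,\alpha}$ hypersurface in a neighborhood of $0$. After applying a rotation, I may assume that the tangent hyperplane at the origin is $H_0 = \{x_n = 0\}$ and that the convex set $\{v_0 = 0\}$ lies below $H_0$.

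Next, I would rescale $v_0$ itself, setting $v_{0,r}(x) := v_0(rx)/r^2$, and extract a uniform-on-compacts subsequential limit $v_{0,0}$ exactly as in Section \ref{s:bu}. This second blow-up inherits all the properties (a)--(e), in particular it is again a nondegenerate global solution. The key geometric input is that the $C^{1,\alpha}$ regularity of $\partial\{v_0>0\}$ at $0$ forces the rescaled zero sets $r^{-1}\{v_0=0\}$ to converge (in, say, the Hausdorff sense on compact sets) to the half-space $\{x_n\leq 0\}$; combined with the uniform convergence $v_{0,r}\to v_{0,0}$, this gives $\{v_{0,0}=0\}=\{x_n\leq 0\}$. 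I expect this identification to be the most delicate step, since one must be sure that the Hausdorff convergence of zero sets and the local uniform convergence of the functions are compatible with the quadratic nondegeneracy so that no extra mass of $\{v_{0,0}=0\}$ appears in the upper half-space.

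With the shape of the zero set in hand, the identification of $v_{0,0}$ is a Liouville-type argument. On the open upper half-space the function $h(x):=v_{0,0}(x)-\tfrac{x_n^2}{2}$ is harmonic (because $\Delta v_{0,0}=1$ there), continuous up to $H_0$, and vanishes on $H_0$. A Schwarz reflection $h(x',-x_n)=-h(x',x_n)$ extends $h$ harmonically to all of $\mathbb R^n$. Because $v_{0,0}$ has at most quadratic growth (inherited from Frehse's bound through the rescaling), so does $h$, and the classical Liouville theorem forces $h$ to be a polynomial of degree at most $2$. The normalizations $h(0)=0$ and $\nabla h(0)=0$ make it a homogeneous quadratic, while vanishing on $H_0$ forces it to depend only on $x_n$; harmonicity then kills it entirely, so $h\equiv 0$ and $v_{0,0}$ is precisely the function in \eqref{e:v00}.

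The final step is a diagonal extraction to realize $v_{0,0}$ as a blow-up of $v$ itself and not merely of $v_0$. Writing $v_0=\lim_{k\to\infty} v_{r_k}$ and $v_{0,0}=\lim_{\ell\to\infty} v_{0,s_\ell}$, both uniformly on compact subsets, for each $\ell$ I would choose $k(\ell)$ so large that $v_{r_{k(\ell)}}$ approximates $v_0$ to within $1/\ell$ on the ball $B_{1/s_\ell}$, and then set $t_\ell:=r_{k(\ell)}s_\ell$. Since $v_{t_\ell}(x)=s_\ell^{-2}\bigl(v_{r_{k(\ell)}}(s_\ell x)\bigr)$, the triangle inequality together with the quadratic rescaling gives $v_{t_\ell}\to v_{0,0}$ uniformly on compact sets, exhibiting $v_{0,0}$ as a genuine blow-up of $v$ at $x_0$ and completing the proof of the corollary.
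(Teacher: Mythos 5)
Your proof is correct and follows essentially the same route as the paper: invoke Proposition \ref{p:first-regularity} to get a tangent hyperplane $H_0$ for $\partial\{v_0>0\}$ at the origin, blow up $v_0$ a second time, identify $\{v_{0,0}=0\}$ with the half-space $\{x_n\le 0\}$, then kill $h := v_{0,0}-\tfrac{x_n^2}{2}$ via Schwarz reflection and the quadratic-growth Liouville theorem, and finally run a diagonal extraction to realize $v_{0,0}$ as a blow-up of $v$ itself. The only place to be slightly more careful is the quantitative threshold in the diagonal step (choose $k(\ell)$ so that the error survives the $s_\ell^{-2}$ rescaling), but the paper treats this as routine and the fix is immediate.
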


Roughly two thirds of this section will be devoted to the proof of Proposition \ref{p:first-regularity}. The remaining one will be dedicated to the very last step of the proof that at a regular point of $v$ the free boundary is indeed regular. Considering Corollary  \ref{c:one-good-bu}, this last step can in fact be reduced to the following $\varepsilon$-regularity statement.

\begin{theorem}\label{t:real-eps-reg}
There is a $\varepsilon>0$ with the following property: if $0\in \partial \{v>0\}$, $B_2\subset D$, and $\|v-v_{0,0}\|_{C^1 (B_2)}$ is smaller than $\varepsilon$, then $B_1\cap \partial \{v>0\}$ is a $C^{1,\alpha}$ hypersurface.  
\end{theorem}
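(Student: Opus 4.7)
My plan is to convert the $C^1$-closeness to $v_{0,0}$ into directional monotonicity of $v$, deduce from that a Lipschitz bound on $\partial\{v>0\}$, and then iterate a flatness-improvement statement to upgrade Lipschitz to $C^{1,\alpha}$. As a first step, note that $D_n v_{0,0}(x)=x_n$ for $x_n>0$ and vanishes otherwise, so more generally for any unit vector $e$ making angle less than some fixed $\theta_0$ with $e_n$, $D_e v_{0,0}$ is strictly positive on $\{x_n>0\}$ (with a linear lower bound in $x_n$) and vanishes on $\{x_n\le 0\}$. The hypothesis $\|v-v_{0,0}\|_{C^1(B_2)}<\varepsilon$ then gives $D_e v(x)>0$ at every $x\in B_{3/2}$ with $x_n\geq C\varepsilon$.

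To push this positivity all the way down to $\partial\{v>0\}$, I would invoke the almost-convexity estimate of Section \ref{s:almost-convexity} together with the maximum principle. On $\{v>0\}$ the derivative $D_e v$ is harmonic (since $\Delta v\equiv 1$ there), and on $\partial\{v>0\}$ it vanishes because $\nabla v=0$ on the contact set by Frehse's theorem. Applying the strong maximum principle to the domain $\{v>0\}\cap B_{3/2}$, the boundary values of $D_e v$ control it inside: on the spherical portion $\partial B_{3/2}\cap\{v>0\}$ the first step gives $D_e v\geq 0$ away from the equator $\{x_n=0\}$, while near the equator the almost-convexity bound $D_{ee} v\geq -\omega(\dist(\cdot,\partial\{v>0\}))$ with $\omega(r)\downarrow 0$ prevents $D_e v$ from dipping significantly negative. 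For $\varepsilon$ small this forces $D_e v\geq 0$ throughout $\{v>0\}\cap B_{3/2}$ simultaneously for every $e$ in the cone $\{|e-e_n|<\theta_0\}$; this cone monotonicity means $\{v>0\}\cap B_1$ is a subgraph in each such direction, so $\partial\{v>0\}\cap B_1$ is the graph of a Lipschitz function $f$ over $\{x_n=0\}$ with Lipschitz constant at most $C\varepsilon$.

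For the $C^{1,\alpha}$ upgrade, I would iterate a flatness-improvement lemma at every base point $x_0\in\partial\{v>0\}\cap B_{1/2}$: writing $v_r(x):=v(x_0+rx)/r^2$ and, for a unit vector $\nu$, $V_\nu(x):=\tfrac{1}{2}((x\cdot\nu)_+)^2$, there exist constants $\beta\in(0,\tfrac12)$, $\gamma\in(0,1)$, and $\varepsilon_0>0$ such that if $\|v_r-V_\nu\|_{C^1(B_2)}<\varepsilon<\varepsilon_0$ then there is $\nu'$ with $|\nu'-\nu|\leq C\varepsilon$ and $\|v_{\beta r}-V_{\nu'}\|_{C^1(B_2)}\leq \gamma\varepsilon$. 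Applying this to the dyadic scales $\beta^k$ starting from the hypothesis of Theorem \ref{t:real-eps-reg} yields at every $x_0$ a Cauchy sequence of unit normals $\nu_k$ converging at geometric rate to a limit $\nu(x_0)$, which is the inward unit normal to $\partial\{v>0\}$ at $x_0$ and satisfies the Hölder modulus $|\nu(x_0)-\nu(y_0)|\leq C|x_0-y_0|^\alpha$ for $\alpha=\log\gamma/\log\beta$; this is precisely $C^{1,\alpha}$ regularity of $f$.

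The genuine obstacle is proving the flatness-improvement lemma, which I would attack by contradiction and compactness à la Caffarelli. Suppose it fails: there is a sequence of solutions $v^k$ with $\|v^k-v_{0,0}\|_{C^1}=:\varepsilon_k\to 0$ for which no admissible $\nu'$ works. Rescaling the defect $w^k:=(v^k-v_{0,0})/\varepsilon_k$, Frehse's $C^{1,1}$ estimate together with the Lipschitz bound on each $\partial\{v^k>0\}$ (from the first two paragraphs applied to $v^k$) yields uniform estimates, and we extract a limit $w$ which is harmonic on $\{x_n>0\}$, satisfies a homogeneous linearized free-boundary condition on $\{x_n=0\}$ (essentially $\partial_n w=0$, reflecting $\nabla v=0$ on the contact set), and has at most quadratic growth. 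Schwartz reflection across $\{x_n=0\}$ and Liouville's theorem then force $w$ to be a first order polynomial in $x'$, which produces the rotated normal $\nu'$ and contradicts the supposed failure. The delicate point is controlling $w^k$ uniformly up to the free boundary of $v^k$, whose position is itself what one is attempting to linearize around, and verifying that the limiting boundary condition is exactly $\partial_n w=0$; this requires careful use of the quadratic nondegeneracy of Lemma \ref{l:lower-growth} applied to each $v^k$.
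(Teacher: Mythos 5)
Your overall architecture is sound, and the first reduction---using $\|v-v_{0,0}\|_{C^1(B_2)}<\varepsilon$ to get $D_e v>0$ away from the free boundary for all $e$ in a fixed cone around $e_n$---matches what the paper does. The real difficulty, however, is exactly the step you wave at in the middle, and your proposed fix does not work.

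You want to conclude $D_e v\geq 0$ on $\{v>0\}\cap B_{1/2}$ from: (i) $D_e v\geq 0$ on $\partial\{v>0\}$, (ii) $D_e v\geq 0$ away from a thin neighborhood of $\{x_n=0\}$ on $\partial B_{3/2}$, and (iii) control of $D_e v$ near the equator via the almost-convexity bound. But the almost-convexity estimate bounds $D_{ee}v$ from below by $-\omega(r)$ near the \emph{original} free boundary point $0$; it does not control $D_e v$ near the equator of $\partial B_{3/2}$, which may be far from $\partial\{v>0\}$. More fundamentally, even if you knew $D_e v\geq -\sigma$ everywhere (which the $C^1$-closeness does give), the maximum principle applied to the harmonic function $D_e v$ on $\{v>0\}\cap B_{3/2}$ only yields $D_e v\geq -\sigma$ inside, not $D_e v\geq 0$. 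Small negative boundary data propagates inward; nothing in your setup forbids it. The paper handles this by a more delicate comparison: rather than applying the maximum principle directly to $h:=\tfrac{2}{\lambda}D_e v$, one assumes for contradiction $h(x_0)<0$ and compares against the auxiliary harmonic function $k(x)=h(x)-\delta\bigl[v(x)-\tfrac{1}{2n}|x-x_0|^2\bigr]$ on $B_{1/4}(x_0)\cap\{v>0\}$. The point is that $v$ itself vanishes (quadratically, by Frehse) on $\partial\{v>0\}$, so the subtracted term is harmless there but provides a quadratic push (via $|x-x_0|^2$) on the spherical part of the boundary, overpowering the $-\sigma$ defect for a suitable choice $\delta=64n\sigma$ and $\sigma$ small. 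This is the content of \cite[Lemma 11]{Caffarelli-JFA} and it is the key ingredient you are missing.

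Your final step is a genuinely different route. The paper, once Lipschitz regularity of $\partial\{v>0\}$ is in hand, applies the boundary Harnack inequality to ratios $\frac{\partial v/\partial y_1}{\partial v/\partial y_2}$ exactly as in the preceding subsection, obtaining H\"older continuity of $\nabla v/|\nabla v|$ and hence $C^{1,\alpha}$ of the free boundary directly. You instead propose an improvement-of-flatness lemma proved by contradiction/compactness and linearization, which is a legitimate alternative strategy (and closer in spirit to De Giorgi-type arguments in minimal surface theory), but you correctly flag that controlling the rescaled error $w^k$ uniformly up to the free boundary and identifying the limiting boundary condition are nontrivial; as written this is a sketch of a program rather than a proof. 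The paper's boundary Harnack route is shorter once the Lipschitz estimate is secured, precisely because it avoids having to linearize the free boundary condition.
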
 

\subsection{Lipschitz regularity for $\partial \{v_0>0\}$}\label{s:Lipschitz} In the first step we will derive the Lipschitz regularity of $\partial \{v_0>0\}$ in a neighborhood of $0$ using a ``PDE argument''. Even though we already know this from the convexity of $\{v_0=0\}$, the argument will in fact be useful to prove Theorem \ref{t:real-eps-reg} because we will be able to export it to the ``perturbed'' setting of the theorem.

\medskip 

Recall that by assumption $\{v_0=0\}$ contains a ball $B_{2s} (z)$. By possibly choosing $s$ even smaller, we will assume that $B_s (0)$ and $B_{2s} (z)$ are disjoint. Consider first the set of directions 
\[
W:=\{-\textstyle{\frac{y}{|y|}} : y\in B_s (z)\}\, .
\] 
We claim that 
\begin{equation}\label{e:positivity}
\frac{\partial v_0}{\partial y} (x) \geq 0 \qquad\forall y\in W, \,  \forall x\in B_s (0)\, .
\end{equation}
In fact for every $x\in B_s (0)$ and $y\in W$ consider the point $z':= x+w$ where $w\in B_s (0)$ is such that $y= -\frac{w}{|w|}$. 
Observe that 
\[
\frac{d}{dt} \left[\frac{\partial v_0}{\partial y} (z'+ty)\right] \geq 0
\]
by convexity. On the other hand $\frac{\partial v_0}{\partial y} (z')=0$ because $z'\in B_{2s} (z) \subset \{v_0=0\}$. Thus \eqref{e:positivity} follows from integrating the latter inequality for $t\in [0,|w|]$. So the harmonic function $\frac{\partial v_0}{\partial y}$ is nonnegative on $B_s (0) \setminus \{v_0>0\}$, while it vanishes on $\partial \{v_0=0\}$: in particular it either vanishes identically on  $B_s (0) \setminus \{v_0>0\}$ or it is strictly positive by the strong maximum principle. But it is not difficult to see that it cannot vanish identically: if $\frac{\partial v_0}{\partial y}$ were to vanish everywhere, then $v_0$ would vanish on the set $B_{2s} (z) + \mathbb R y$ simply by integration along the lines $z'+\mathbb R y$ for $z'\in B_{2s} (z)$, but the latter set contains a neighborhood of the origin, which instead belongs to $\partial \{v_0>0\}$. 

Now, the function $\frac{\nabla v_0}{|\nabla v_0|}$ is the unit normal to the level sets of $v_0$ on all points where it does not vanish. The above estimate shows that 
\[
\frac{\nabla v_0}{|\nabla v_0|} \cdot y >0 \qquad \forall y \in W 
\]
on $\{v_0>0\}\cap B_s (0)$. This not only implies that the vector field $\frac{\nabla v_0}{|\nabla v_0|}$ does not vanish in $\{v_0>0\}\cap B_s (0)$, but it also implies that it belongs to an appropriate cone of directions. Geometrically this means that the level sets $\{v_0>0\}$ are graphs of Lipschitz functions over the plane $\pi$ orthogonal to $z$, with a Lipschitz constant which depends only on $s$, cf. Figure \ref{fig:4}. In particular this constant is under control if we consider level sets of the form $\{v_0=\varepsilon\}$ for small $\varepsilon>0$: as we let $\varepsilon$ approach $0$, we infer the same Lipschitz regularity for $B_s (0) \cap \partial \{v_0>0\}$. 

\begin{figure}[htbp]
\begin{center}
\input{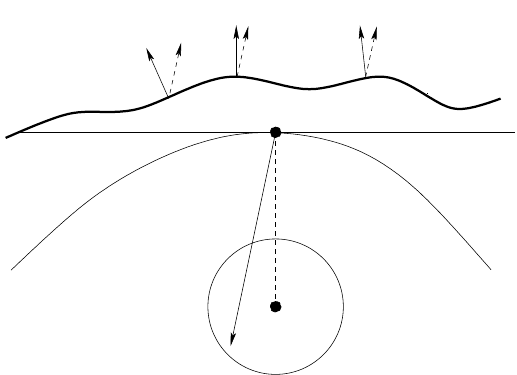_t}
\end{center}
\caption{An illustration of the Lipschitz bound for the level set $\{v_0=\varepsilon\}$, which in the picture is represented by the thick line. $w$ is an element of $B_{2s} (z)$, while $y=-\frac{w}{|w|}$ is the corresponding element in $W$: dashed arrows coincide with it. The normals to the level set are given by the values of $\frac{\nabla v_0}{|\nabla v_0|}$: since their scalar products with all the elements $y\in W$ must be positive, these normals cannot belong to the ``horizontal plane'' $H$ orthogonal to $z$, in fact they form an angle with $H$ which is larger than a constant $c$ which depends on $s$.}\label{fig:4}
\end{figure}

\subsection{$C^{1,\alpha}$ regularity of $\partial \{v_0>0\}$} This is the most technical part of the proof. If we fix any two vectors $y_1, y_2$ in $W$, we can take the ratio
\[
\frac{{\textstyle{\frac{\partial v_0}{\partial y_1}}}}{{\textstyle{\frac{\partial v_0}{\partial y_2}}}}
\]
and, because $\{v_0>0\}\cap B_s (0)$ is a Lipschitz domain and both functions vanish on $\{v_0=0\}$, a hard PDE result (the boundary Harnack inequality) implies that the ratio is in fact H\"older continuous, for some fixed H\"older exponent. In fact this comes with a bound on the H\"older seminorm\footnote{The H\"older seminorm of a function $f$ with exponent $\alpha$ is $[f]_\alpha = \sup_{x\neq y} \frac{|f(x)-f(y)|}{|x-y|^\alpha}$.} in inner balls (for instance in $B_{s/2} (0) \cap \{v_0>0\}$) which depends only on the Lipschitz constant of $\partial \{v_0>0\}\cap B_s (0)$. 

Even for several PDE experts, the fact that this estimate works under the low regularity assumption of Lipschitz boundaries will probably look like a nontrivial fact: for a proof they can consult \cite[Theorem 11.6]{Caffarelli-Salsa}. 
For the nonexpert I wish to give at least some rough explanation. Assume for the moment that the boundary $\partial \{v_0>0\} \cap B_s (0)$ is flat and to fix ideas let us take $s=1$. The domain will then be denoted by $B_1^+$, the upper half ball $B_1 (0) \cap \{x_n>0\}$. The claim is that, if we take two positive harmonic functions on $B_1^+$, say $\gamma$ and $\beta$, which vanish on the flat part of the boundary, namely $B_1 (0) \cap \{x_n=0\}$, then the ratio $\frac{\gamma}{\beta}$ is in fact smooth up to the boundary. 

We can extend both functions by Schwartz reflection to $B_1$ and we keep denoting such extensions by $\gamma$ and $\beta$. One of the two functions could well be $x_n$ and in fact to prove our claim it will suffice to just show that $\frac{\gamma}{x_n}$ and $\frac{x_n}{\gamma}$ are both regular. Consider $\frac{\gamma}{x_n}$. The latter is obviously a well defined function on $B_s\setminus \{x_n=0\}$, which extends continuously on $x_n=0$ to the value of $\frac{\partial \gamma}{\partial x_n}$. The same consideration applies to $\frac{\partial}{\partial x_i} \left(\frac{\gamma}{x_n}\right)$ for $i=1, \ldots, n-1$, as the partial derivatives $\frac{\partial \gamma}{\partial x_i}$ all vanish on $\{x_n=0\}\cap \partial B_1$. Differentiating in $x_n$ yields the formula 
\[
\frac{\partial}{\partial x_n} \left(\frac{\gamma}{x_n}\right) = \frac{1}{x_n^2} \left(x_n \frac{\partial \gamma}{\partial x_n} - \gamma\right)\, .
\]
We claim that the function extends continuously to $0$ on $\{x_n=0\}$: the reader can use the Taylor's formula in the $x_n$ and note that, given $\frac{\partial^2 \gamma}{\partial x_n^2} = -\sum_{i\leq n-1} \frac{\partial^2 \gamma}{\partial x_i^2}$, $\frac{\partial^2 \gamma}{\partial x_n^2}$ must vanish on $\{x_n=0\}$ because all the tangential derivatives vanish on $\{x_n=0\}$. We can now iterate these considerations, using the harmonicity of the partial derivatives of $\gamma$: it is not difficult to see that $\frac{\gamma}{x_n}$ is then $C^2$ (in fact the function is real analytic, but this is besides our point). 

What about $\frac{x_n}{\gamma}$? It would suffice to show that $\frac{\gamma}{x_n}$ is bounded way from zero over $B_{1/2} (0)$ and then the desired regularity would follow from the formula $\frac{x_n}{\gamma} = \left(\frac{\gamma}{x_n}\right)^{-1}$. However, given that $\frac{\partial \gamma}{\partial x_n}$ is positive on $\{x_n=0\}$ by Hopf's Lemma (which in fact is just a manifestation of the strong maximum principle), it is obvious that, for every constant $C$ sufficiently large, the inequality $C\gamma \geq x_n$ in a neighborhood of $B_{1/2} (0) \cap \{x_n=0\}$ in the upper half space, i.e. in $B_{1/2} (0) \cap \{0\leq x_n\leq \varepsilon\}$ for some positive $\varepsilon$. But because $\gamma$ is bounded away from zero on $B_{3/4} (0) \cap \{x_n\geq \varepsilon\}$ while $x_n$ is bounded from above on the same domain, the same inequality $x_n \leq C \gamma$ holds there at the price of maybe taking a larger constant $C$. Summarizing $C \gamma \geq x_n$ on the upper half ball whenever $C$ is large enough. So $\frac{\gamma}{x_n}\geq C^{-1}$ on the same domain. But then because both functions $\gamma$ and $x_n$ are odd in $x_n$, the same estimate holds on the lower half ball. This gives the desired control from below.  

We now resume our discussion. We have concluded that 
\[
\frac{{\textstyle{\frac{\partial v_0}{\partial y_1}}}}{{\textstyle{\frac{\partial v_0}{\partial y_2}}}}
\]
is H\"older for every choice of $y_1, y_2\in W$. Because $W$ spans $\mathbb R^n$, we can choose to vary $y_1$ in a basis and conclude that  
\[
\frac{\nabla v_0}{{\textstyle{\frac{\partial v_0}{\partial y_2}}}}
\]
is H\"older continuous. However, the modulus of an H\"older continuous vector field is also H\"older continuous and thus we conclude that 
\begin{equation}\label{e:ratio}
\left({\textstyle{\frac{\partial v_0}{\partial y_2}}}\right)^{-1} |\nabla v_0|
\end{equation}
is H\"older continuous. Recall that indeed $\frac{\partial v_0}{\partial y_2}$ is positive. In fact the ratio is bounded away from zero (again by the boundary Harnack inequality). So the inverse of the ratio in \eqref{e:ratio} is as well H\"older and, because the product of H\"older functions is H\"older, we finally conclude that $\frac{\nabla v_0}{|\nabla v_0|}$ is H\"older.

We now can invoke the same argument of the previous section: the vector field $\frac{\nabla v_0}{|\nabla v_0|}$ gives in fact the unit normal to the level sets of $v_0$ on $\{v_0>0\}$ and its H\"older continuity means the H\"older continuity of the tangent plane to such level sets. Since our estimate is independent of which level set we are looking at, it can be transferred to $\partial \{v_0=0\}$ (as we transferred the Lipschitz bound). 

\subsection{Proof of Theorem \ref{t:real-eps-reg}} The final step is to transfer the conclusions of the previous two sections to the ``approximate'' situation of Theorem \ref{t:real-eps-reg}. First of all, observe that $\{v=0\}$ is in fact ``close'' to $\{v_{0,0}=0\}$ in the sense that, for every fixed $x_0$ in the interior of $\{v_{0,0}=0\}$, e.g. $x_0 = (0, 0, \ldots , 1)$, is in fact in the interior of $v$ provided $\varepsilon$ is sufficiently small. In fact, assume the contrary: then there would be a sequence $\varepsilon_k\downarrow 0$, a sequence of $v_k$'s satisfying the assumptions of Theorem \ref{t:real-eps-reg} and a sequence of points $x_k$ with $v_k (x_k)\neq 0$ converging to $x_0$. Recall however the lower bound on the growth of $v_k$ by Lemma \ref{l:lower-growth}: 
\[
\max_{\overline{B}_{1/2} (x_k)} v_k \geq  \frac{1}{2n} \frac{1}{2^n}\, .
\]
In particular we conclude
\[
\max_{\overline{B}_{1/2} (x_0)} v_{0,0} \geq \frac{1}{2n} \frac{1}{2^n}\, ,
\]
contradicting $B_{1/2} (x_0) \subset \{v_{0,0} =0\}$. In fact, the above argument, suitably modified, proves the following

\begin{corollary}\label{c:Hausdorff-convergence}
For every radius $R>0$ and every $\delta>0$, if $\varepsilon>0$ in Theorem \ref{t:real-eps-reg} is sufficiently small, then the Hausdorff distance between $\{v=0\}\cap \overline{B}_R (0)$ and $\{v_{0,0}=0\}\cap \overline{B}_R (0)$ is smaller than $\delta$.
\end{corollary}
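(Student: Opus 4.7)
The plan is to establish the Hausdorff closeness through its two defining one-sided inclusions, choosing $\varepsilon$ small in terms of both $R$ and $\delta$. By the explicit formula \eqref{e:v00}, the set $\{v_{0,0}=0\}$ is exactly the closed lower half-space $\{x_n\leq 0\}$, so the task decouples into (a) showing that every point of $\{v=0\}\cap\overline{B}_R$ lies within $\delta$ of $\{x_n\leq 0\}$, and (b) showing that every point of $\{x_n\leq 0\}\cap\overline{B}_R$ lies within $\delta$ of $\{v=0\}\cap\overline{B}_R$.

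Direction (a) will follow by a one-line pointwise comparison: if $v(x)=0$ with $\mathrm{dist}(x,\{x_n\leq 0\})\geq\delta$, then $x_n\geq\delta$, so $v_{0,0}(x)=x_n^2/2\geq\delta^2/2$; combining this with the hypothesis $\|v-v_{0,0}\|_{C^1(B_2)}<\varepsilon$ (which in particular bounds the $C^0$ norm) contradicts $v(x)=0$ as soon as $\varepsilon<\delta^2/2$.

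Direction (b) is where the real work lies, and my plan is to replay the nondegeneracy-plus-compactness argument the authors already ran immediately before the corollary, but at a shifted auxiliary point. Given $y\in\{x_n\leq 0\}\cap\overline{B}_R$ I would set $y':=y-\delta e_n$, so that $B_{\delta/2}(y')\subset\{x_n\leq -\delta/2\}\subset\{v_{0,0}=0\}$ while $|y-y'|=\delta$. Arguing by contradiction: if along some $\varepsilon_k\downarrow 0$ and some admissible $v_k$ there were points $y_k\in\{x_n\leq 0\}\cap\overline{B}_R$ with $v_k(y_k-\delta e_n)\neq 0$, Lemma \ref{l:lower-growth} would yield
\[
\max_{\overline{B}_{\delta/2}(y_k-\delta e_n)} v_k\geq \frac{(\delta/2)^2}{2n}\, ,
\]
and extracting a subsequential limit $y_k\to y_*$ and using the uniform convergence $v_k\to v_{0,0}$ (inherited from the $C^1$ smallness) would transport the lower bound to $v_{0,0}$ on $\overline{B}_{\delta/2}(y_*-\delta e_n)$, contradicting the inclusion of this ball in $\{x_n\leq 0\}=\{v_{0,0}=0\}$. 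Hence $v(y-\delta e_n)=0$ for $\varepsilon$ small, placing a zero of $v$ within $\delta$ of $y$.

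The only real nuisance, and the single point requiring any care, will be ensuring that the shifted balls $\overline{B}_{\delta/2}(y-\delta e_n)$ stay inside both $B_2$ (so that the $C^1$ control from Theorem \ref{t:real-eps-reg} applies) and $\overline{B}_R$ (so that the approximating zero qualifies as a point of $\{v=0\}\cap\overline{B}_R$). These are purely technical side conditions that amount, in the worst case, to a harmless $O(\delta)$ loss in the final bound or an initial mild shrinkage of $R$ and $\delta$, and do not affect the qualitative content of the corollary. Beyond that bookkeeping, the argument is a direct repackaging of the two tools already established: $C^0$ closeness of $v$ to $v_{0,0}$ and the quadratic lower-growth Lemma \ref{l:lower-growth}.
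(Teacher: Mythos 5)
Your proof is correct and takes essentially the same route as the paper, which disposes of the corollary with the one-line remark that ``the above argument, suitably modified, proves the following'' — the ``above argument'' being precisely your compactness-plus-nondegeneracy contradiction via Lemma \ref{l:lower-growth}, and your direction (a) being the easy $C^0$-comparison supplement. The bookkeeping you flag (keeping the shifted ball $\overline{B}_{\delta/2}(y-\delta e_n)$ inside both $\overline{B}_R$ and $B_2$) is genuine but is readily handled, e.g.\ by replacing the pure vertical translation with $y\mapsto(1-\tfrac{\delta}{2R})y-\tfrac{\delta}{2}e_n$, which stays in $\overline{B}_R$ and in $\{x_n\le-\delta/2\}$ while moving $y$ by at most $\delta$, under the implicit standing assumption that $R$ is small enough that $\overline{B}_R$ sits well inside $B_2$.
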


Next consider $W:= \{- \frac{y}{|y|}:y\in B_{1/4} (x_0)\}$ and $x\in B_{1/4} (0)\cap \{v>0\}$. If we were able to argue as in Section \ref{s:Lipschitz} to conclude $\frac{\partial v}{\partial y} (x) >0$ the rest of the considerations in that section would carry over to show that $\partial \{v>0\}\cap B_{1/8} (0)$ is Lipschitz. In particular from there we would also conclude the $C^{1,\alpha}$ regularity following precisely the argument of the previous section. However, we lack the convexity of $v$, which was the deus ex-machina of the argument in Section \ref{s:Lipschitz}.  

\medskip

We will now remedy to the absence of the latter property and show that indeed $\frac{\partial v}{\partial y} >0$ on $\{v>0\}\cap B_{1/2} (0)$, provided $\varepsilon$ is small enough. Fix in fact a $y$ and fix for the moment a number $\sigma>0$, which we will choose only at the very end. On the domain $B_1 (0) \cap \{\dist (\cdot, \{v_{0,0}=0\})>\frac{\sigma}{2}\}$ the infimum of $\frac{\partial v_{0,0}}{\partial y}$ is some positive number $\lambda$. In particular, if $\varepsilon>0$ is sufficiently small, Corollary \ref{c:Hausdorff-convergence} implies that, on the domain $B_1 (0) \cap \{\dist (\cdot, \{v=0\})\geq \sigma\})\}$ the infimum of $\frac{\partial v}{\partial y}$ must be at least $\frac{\lambda}{2}$. If we consider the harmonic function 
\[
h := \frac{2}{\lambda} \frac{\partial v}{\partial y}
\] 
on $\Omega:= \{v>0\}$, we then conclude that 
\begin{itemize}
\item[(a)] If $x\in \Omega \cap B_1 (0)$ and $\dist (x, \partial \Omega) \geq \sigma$, then $h\geq 1$.
\end{itemize}
On the other hand, because $\frac{\partial v}{\partial y}$ is $\varepsilon$ close to $\frac{\partial v_{0,0}}{\partial y}$, which is nonnegative, again if $\varepsilon>0$ is sufficiently small we have
\begin{itemize}
\item[(b)] $\inf_\Omega h > -\sigma$.
\end{itemize}
Finally, because $\nabla v =0$ on $B_1 \cap \partial \Omega$, 
\begin{itemize}
\item[(c)] $h=0$ on $B_1\cap \partial \Omega$.
\end{itemize}
We will now see that these three conditions imply that in fact $h\geq 0$ on $\Omega\cap B_{1/2}$, if $\sigma$ is smaller than a geometric constant. The simple argument is taken from \cite[Lemma 11]{Caffarelli-JFA} and goes as follows. Assume there is a point $x_0$ at which $h(x_0)<0$. At this point we necessarily have $\dist (x_0, \partial \Omega)<\sigma$, because of (a). Consider then the harmonic function
\[
k (x) = h(x) - \delta \left[v (x)-\frac{1}{2n} |x-x_0|^2\right]\, ,
\]
where $\delta$ is again a nonnegative number which will be chosen appropriately in a few lines. Now $k(x_0)= h(x_0) - \delta v (x_0) <0$. By the maximum principle $v$ must have a negative minimum on $\partial (B_{1/4} (x_0)\cap \Omega)$. The latter minimum cannot be on $\partial \Omega$, because there both $h$ and $v$ vanish and hence $k$ must be positive. Thus
\begin{itemize}
\item[(C)] the infimum of $k$ on $\Omega \cap\partial B_{1/4} (x_0)$ is negative 
\end{itemize}

Next, because $\dist (x_0, \partial \Omega) \leq \sigma$, we also have $v (x) \leq C \sigma^2$ on $\Omega \cap \partial B_{1/4} (x_0) \cap \{\dist (\cdot, \partial \Omega) < \sigma\}$, where $C$ is the constant in Frehse's theorem.  Thus, on that region we have
\[
k \geq -\sigma - C \delta \sigma^2 + \frac{\delta}{2n} \left(\frac{1}{4}\right)^2\, .
\]
We now can declare that
\begin{equation}\label{e:choice-delta}
\delta = 64n \sigma\, ,
\end{equation}
so that we get
\begin{equation}\label{e:choice-sigma-1}
k \geq \sigma - 64C n \sigma^3  \qquad \mbox{on $\Omega \cap \partial B_{1/4} (x_0) \cap \{\dist (\cdot, \partial \Omega)< \sigma$\}}\, .
\end{equation}
On the other hand on $\partial B_{1/4} (x_0) \cap \{\dist (\cdot, \partial \Omega)\geq \sigma\}$ we know that $h\geq 1$, while $v \leq C$, again by Frehse's Theorem. So we infer 
\begin{equation}\label{e:choice-sigma-2}
k \geq 1 - 64 C n \sigma \qquad \mbox{on $\partial B_{1/4} (x_0) \cap \{\dist (\cdot, \partial \Omega)\geq \sigma$\}}\, .
\end{equation}
Since $C$ is a fixed constant (decided by the proof of Frehse's Theorem), clearly a choice of a $\sigma$ suitably small implies through \eqref{e:choice-sigma-1} and \eqref{e:choice-sigma-2} that $k$ is positive on $\Omega \cap \partial B_{1/4} (x_0)$. This is however at odds with statement (C). The contradiction stems from having assumed that $h$ takes a negative value on $B_{1/2}\cap \Omega$. Hence it is in fact a nonnegative function there, which is the conclusion we wanted to reach. 

\section{Teaser}\label{s:otto}

As already mentioned in the introduction, the ``blow-up and partial regularity'' program has been carried by Luis and some other co-authors in all the four problems mentioned in Section \ref{s:due}. We have examined in fairly many details the first of the problems, for the other three the starting points are the papers \cite{Caffarelli-79,Alt-Caffarelli,ACF}. The theory developed by Luis and co-authors covers much more general situations than the model ones mentioned in Section \ref{s:due} and many results extend to the parabolic setting. For an account of many general aspects we refer the interested reader to the book \cite{Caffarelli-Salsa}. 

Obviously the ``subdivision'' of the free boundary into a regular and singular part leaves open the question of:
\begin{itemize}
\item whether the singular part is truly present;
\item if it is present how large it can be;
\item what structural properties of the solutions we can infer at singular points.
\end{itemize}
The pioneering works of Luis already address some of these questions, but much more was proved subsequently and in fact an entire theory of the ``singular'' points seem to be emerging from the works of several authors in recent years. We refer the readers to the book \cite{Bozho} and the surveys \cite{Alessio,Xavi,FR}.

\bibliographystyle{plain}

\end{document}